\newtheorem{theorem}{Theorem}
\newtheorem{proposition}{Proposition}[section]
\newtheorem{lemma}[proposition]{Lemma}
\newtheorem{remark}[proposition]{Remark}
\DeclareMathOperator{\id}{id}
\DeclareMathOperator{\dimaff}{dim_{\mathsf{aff}}}
\DeclareMathOperator{\GL}{GL}
\DeclareMathOperator{\PSL}{PSL}
\DeclareMathOperator{\PGL}{PGL}
\DeclareMathOperator{\SL}{SL}
\newcommand{\triple}[1]{{\left\vert\kern-0.25ex\left\vert\kern-0.25ex\left\vert #1 
    \right\vert\kern-0.25ex\right\vert\kern-0.25ex\right\vert}}
\newcommand{\iii}{\mathtt{i}}
\newcommand{\jjj}{\mathtt{j}}
\newcommand{\kkk}{\mathtt{k}}
\title[An affine IFS with two measures of maximal dimension]{A strongly irreducible affine iterated function system with two invariant measures of maximal dimension}
\author{Ian D. Morris and Cagri Sert}
\address{I. D. Morris: Department of Mathematics, University of Surrey, Guildford, GU2 7XH, UK}
\email{i.morris@surrey.ac.uk }
\address{C. Sert: Department Mathematik, ETH Z\"urich, R\"amistrasse 101, 8092, Z\"urich, Switzerland}
\email{cagri.sert@math.uzh.ch}
\begin{document}

\begin{abstract}
A classical theorem of Hutchinson asserts that if an iterated function system acts on $\mathbb{R}^d$ by similitudes and satisfies the open set condition then it admits a unique self-similar measure with Hausdorff dimension equal to the dimension of the attractor. In the class of measures on the attractor which arise as the projections of shift-invariant measures on the coding space, this self-similar measure is the unique measure of maximal dimension. In the context of affine iterated function systems it is known that there may be multiple shift-invariant measures of maximal dimension if the linear parts of the affinities share a common invariant subspace, or more generally if they preserve a finite union of proper subspaces of $\mathbb{R}^d$. In this note we construct examples where multiple invariant measures of maximal dimension exist even though the linear parts of the affinities do not preserve a finite union of proper subspaces.
\end{abstract}
\maketitle

\section{Introduction}

We recall that an iterated function system is by definition a tuple $(T_1,\ldots,T_N)$ of contracting transformations of some metric space $X$, which in this article will be $\mathbb{R}^d$ equipped with the Euclidean distance. To avoid trivialities it will be assumed throughout this article that $N \geq 2$. If $(T_1,\ldots,T_N)$ is an iterated function system acting on $\mathbb{R}^d$ then it is well-known that there exists a unique nonempty compact set $Z\subset \mathbb{R}^d$ with the property $Z =\bigcup_{i=1}^N T_iZ$, called the \emph{attractor} or \emph{limit set} of the iterated function system. If we define $\Sigma_N:=\{1,\ldots,N\}^{\mathbb{N}}$ with the infinite product topology, there exists moreover a well-defined \emph{coding map} $\Pi \colon \Sigma_N \to \mathbb{R}^d$ characterised by the property
\[\Pi\left[(x_k)_{k=1}^\infty\right] = \lim_{n \to \infty} T_{x_1}\cdots T_{x_n}v\]
for all $v \in \mathbb{R}^d$ and $(x_k)_{k=1}^\infty \in \Sigma_N$, and this coding map is a continuous surjection from $\Sigma_N$ to the attractor.

We recall that $(T_1,\ldots,T_N)$ is said to satisfy the \emph{open set condition} if there exists a nonempty open set $U \subseteq \mathbb{R}^d$ such that the sets $T_1U,\ldots,T_NU$ are pairwise disjoint subsets of $U$; if the same condition holds with a nonempty compact set $X\subseteq \mathbb{R}^d$ instead of an open set $U$ then we say that $(T_1,\ldots,T_N)$ satisfies the 
\emph{strong separation condition}. It is not difficult to show that if the strong separation condition is satisfied then the coding map is a homeomorphism from $\Sigma_N$ to the attractor.

 It was shown recently by D.-J. Feng in \cite{Fe19} that if $\mu$ is an ergodic shift-invariant measure on $\Sigma_N$ and $(T_1,\ldots,T_N)$ is an affine iterated function system then $\Pi_*\mu$ is necessarily exact-dimensional: this means that the limit
\[\lim_{r\to 0} \frac{\log \Pi_*\mu(B_r(v))}{\log r}\]
exists for $\Pi_*\mu$-a.e. $v \in \mathbb{R}^d$ and is $\Pi_*\mu$-almost-everywhere constant, where $B_r(v)$ denotes the open Euclidean ball with centre $v$ and radius $r$. This almost sure value will be called the dimension of the measure $\Pi_*\mu$ and is equal to its upper and lower Hausdorff and packing dimensions, see \cite[\S2]{Fa97}.
If $(T_1,\ldots,T_N)$ satisfies the open set condition and the transformations $T_i$ are similarity transformations, it is a classical result of J.E. Hutchinson \cite{Hu81} that there exists a probability measure on the attractor of $(T_1,\ldots,T_N)$ with dimension equal to that of the attractor; moreover, this measure has the form $\Pi_*\mu$ where $\mu$ is a Bernoulli measure on the coding space $\Sigma_N$. In particular $\mu$ is invariant with respect to the shift transformation $\sigma[(x_k)_{k=1}^\infty]:=(x_{k+1})_{k=1}^\infty$. In the more general context in which the transformations $T_i$ are invertible affine transformations of $\mathbb{R}^d$ it is thus natural to ask when there exists an invariant measure $\mu$ on the coding space which projects to a measure with dimension equal to that of the attractor, and if such a measure exists, how many such measures there might be. This question was posed explicitly, in somewhat differing forms, by D. Gatzouras and Y. Peres in \cite{GaPe97} and by A. K\"aenm\"aki in \cite{Ka04}.

In order to describe progress on the problem of finding measures of maximal dimension for affine iterated function systems it is useful to recall some definitions. We recall that the \emph{singular values} of a real invertible $d\times d$ matrix $A$ are defined to be the positive square roots of the eigenvalues of the positive definite matrix $A^\top A$. We write the singular values of $A$ as $\sigma_1(A),\ldots,\sigma_d(A)$ with the convention $\sigma_1(A)\geq \cdots \geq \sigma_d(A)$. We have $\|A\|=\sigma_1(A)$ and $|\det A|=\sigma_1(A)\cdots \sigma_d(A)$ for all $A \in \GL_d(\mathbb{R})$, where $\|\cdot\|$ denotes the operator norm induced by the Euclidean norm. If $d$ is a positive integer and $s$ a non-negative real number, following \cite{Fa88} we define
\[\varphi^s(A):=\left\{\begin{array}{cl}\sigma_1(A)\cdots \sigma_{\lfloor s\rfloor}(A)\sigma_{\lceil s\rceil}(A)^{s-\lfloor s\rfloor}&\text{if }0 \leq s \leq d,\\
|\det A|^{\frac{s}{d}}&\text{if }s \geq d
\end{array}\right.\]
for all real $d\times d$ matrices $A$. The inequality $\varphi^s(AB) \leq \varphi^s(A)\varphi^s(B)$ is valid for all $s$, $A$ and $B$ and was originally noted in \cite{Fa88}. If $(A_1,\ldots,A_N) \in \GL_d(\mathbb{R})^N$ is given then for each $s\geq 0$ we define the $\varphi^s$-pressure of $(A_1,\ldots,A_N)$ to be the quantity
\[P_{\varphi^s}(A_1,\ldots,A_N):=\lim_{n \to \infty} \frac{1}{n}\log \sum_{i_1,\ldots,i_n=1}^N \varphi^s(A_{i_1}\cdots A_{i_n})\]
which is well-defined by subadditivity. The function $s\mapsto P_{\varphi^s}(A_1,\ldots,A_N)$ is continuous with respect to $s$ for fixed  $(A_1,\ldots,A_N) \in \GL_d(\mathbb{R})^N$. When $(A_1,\ldots,A_N) \in \GL_d(\mathbb{R})^N$ is fixed and has the property that $\max_i\|A_i\|<1$ for some norm on $\mathbb{R}^d$, the function $s\mapsto P_{\varphi^s}(A_1,\ldots,A_N)$  has a unique zero which we call the \emph{affinity dimension} of $(A_1,\ldots,A_N)$. If $(T_1,\ldots,T_N)$ is an iterated function system of the form $T_ix=A_ix+v_i$ where $(A_1,\ldots,A_N) \in \GL_d(\mathbb{R})^N$, we define the affinity dimension of $(T_1,\ldots,T_N)$ to be $\dimaff(A_1,\ldots,A_N)$.

The affinity dimension is always an upper bound for the box dimension of the attractor of $(T_1,\ldots,T_N)$, see \cite{Fa88}. If $\mu$ is an ergodic $\sigma$-invariant measure on $\Sigma_N$ then we define its Lyapunov dimension to be the unique zero of the function $[0,\infty) \to \mathbb{R}$ defined by
\[s \mapsto h(\mu)+\lim_{n \to \infty }\frac{1}{n}\int \log \varphi^s(A_{x_1}\cdots A_{x_n})d\mu\left[(x_k)_{k=1}^\infty\right].\]
The Hausdorff dimension of $\Pi_*\mu$ is always bounded above by the Lyapunov dimension of $\mu$, which is bounded above by the affinity dimension of $(A_1,\ldots,A_N)$, see  \cite{Ka04} and  \cite[\S4]{JoPoSi07}. We say that a shift-invariant measure on $\mu$ is a \emph{$\varphi^s$-equilibrium state} for $(A_1,\ldots,A_N)$ if it satisfies
\[P_{\varphi^s}(A_1,\ldots,A_N)=h(\mu)+\lim_{n \to \infty }\frac{1}{n}\int \log \varphi^s(A_{x_1}\cdots A_{x_n})d\mu\left[(x_k)_{k=1}^\infty\right],\]
and a \emph{K\"aenm\"aki measure} if it is a $\varphi^{s_0}$-equilibrium state with $s_0:=\dimaff(A_1,\ldots,A_N)$. For every $(A_1,\ldots,A_N) \in \GL_d(\mathbb{R})^N$ and $s \geq 0$ there exists at least one $\varphi^s$-equilibrium state for $(A_1,\ldots,A_N)$, a point which we discuss in more detail in \S\ref{se:prelim} below. A shift-invariant measure is a K\"aenm\"aki measure if and only if it has Lyapunov dimension equal to $\dimaff(A_1,\ldots,A_N)$.

In certain highly degenerate cases it is possible for the Hausdorff dimension of the attractor of an iterated function system to exceed the dimension of every invariant measure $\Pi_*\mu$ supported on it, and even to exceed the supremum of the dimensions of such measures: see \cite{DaSi17}. However, in generic cases the attractor of an affine iterated function system has Hausdorff dimension equal to the affinity dimension \cite{BaHoRa19,Fa88,Fe19}, and for generic affine iterated functions it is also the case that every K\"aenm\"aki measure $\mu$ on $\Sigma_N$ projects to a measure $\Pi_*\mu$ on the attractor which has dimension equal to the affinity dimension \cite{Fe19,JoPoSi07} and is fully supported on the attractor \cite{BoMo18}.  
 We refer the reader to the articles cited for the various precise meanings of ``generic'' with respect to which these statements are true. It is therefore of interest to ask how many measures of the form $\Pi_*\mu$  may achieve this maximal dimension value. Since any convex combination of measures with maximal dimension  will also have maximal dimension, we ask specifically how many \emph{pairwise mutually singular} measures of the form  $\Pi_*\mu$  may have dimension equal to that of the attractor, where $\mu$ is shift-invariant. In generic cases this is equivalent to asking how many ergodic K\"aenm\"aki measures a given iterated function system may have. This latter question was first raised by A. K\"aenmaki \cite{Ka04} and is the subject of the present article.
   
Let us say that $(A_1,\ldots,A_N) \in \GL_d(\mathbb{R})^N$ is \emph{reducible} if there exists a nonzero proper subspace $V$ of $\mathbb{R}^d$ such that $A_iV= V$ for every $i=1,\ldots,N$, and otherwise is \emph{irreducible}. We also say that $(A_1,\ldots,A_N)$ is \emph{strongly irreducible} if there does not exist a finite collection $V_1,\ldots,V_m$ of nonzero proper subspaces $V_j$ such that $A_i\left(\cup_{j=1}^m V_j\right) = \cup_{j=1}^mV_j$ for every $i$. We extend the notions of irreducibility and strong irreducibility to subsets of $\GL_d(\mathbb{R})$ in the obvious fashion. It is not difficult to see that a subset of $\GL_d(\mathbb{R})$ is (strongly) irreducible if and only if the subsemigroup of $\GL_d(\mathbb{R})$ which it generates is (strongly) irreducible. We will say that an affine iterated function system $(T_1,\ldots,T_N)$ is (strongly) irreducible if it has the form $T_ix=A_ix+v_i$ where $(A_1,\ldots,A_N)$ is (strongly) irreducible.

It is easy to show that every $(A_1,\ldots,A_N) \in \GL_d(\mathbb{R})^N$ has a unique $\varphi^s$-equilibrium state when $s \geq d$. There exist reducible tuples $(A_1,\ldots,A_N) \in \GL_d(\mathbb{R})^N$ which have as many as $(d-\lfloor s\rfloor) {d\choose \lfloor s\rfloor}=\lceil s\rceil {d \choose \lceil s\rceil}$ mutually singular $\varphi^s$-equilibrium states (see \cite{KaMo18}) and it is believed that this is the maximum possible number of mutually singular $\varphi^s$-equilibrium states for any tuple $(A_1,\ldots,A_N) \in \GL_d(\mathbb{R})^N$. This number is known to be a sharp upper bound for the number of mutually singular $\varphi^s$-equilibrium states  in dimensions up to four \cite{Mo18} and for simultaneously upper triangularisable tuples \cite{KaMo18}, but in the general case the best upper bound which has been obtained so far for the number of mutually singular $\varphi^s$-equilibrium states is ${d \choose \lfloor s\rfloor}{d \choose \lceil s\rceil}$, see \cite{BoMo18}. When $s \in (0,d) \cap \mathbb{Z}$ the maximum possible number of mutually singular $\varphi^s$-equilibrium states can be shown to equal ${d \choose s}$ using the techniques of \cite{FeKa11,KaMo18} although this result does not seem to have been explicitly stated in the literature.

If  $(A_1,\ldots,A_N) \in \GL_d(\mathbb{R})^N$ is irreducible, it was shown by D.-J. Feng and A. K\"aenm\"aki in \cite{FeKa11} that $(A_1,\ldots,A_N)$ has a unique $\varphi^s$-equilibrium state for all $s \in (0,1]$, and their argument easily extends to cover the case $s \in [d-1,d)$. In particular if $(T_1,\ldots,T_N)$ is an irreducible affine iterated function system acting on $\mathbb{R}^2$ then it has a unique K\"aenm\"aki measure. It was shown by the first named author and A. K\"aenm\"aki in \cite{KaMo18} that in three dimensions strong irreducibility is sufficient for the uniqueness of $\varphi^s$-equilibrium states (and hence of K\"aenm\"aki measures) but irreducibility is not. A criterion for uniqueness of $\varphi^s$-equilibrium states in terms of irreducibility and strong irreducibility of successive exterior powers was also given in that article, and is discussed further in \S\ref{se:prelim} below. In dimensions higher than two irreducibility does not suffice for the uniqueness of the K\"aenm\"aki measure: using the arguments of \cite[\S9]{KaMo18} together with the results of \cite{Mo17} one may show that the example
\begin{equation}\label{eq:old-example}A_1:=\begin{pmatrix}0&0&\frac{1}{2}\\ 
\frac{2}{3}&0&0\\
0&\frac{1}{2}&0\end{pmatrix},\qquad
 A_2:=\begin{pmatrix}0&\frac{2}{3}&0\\ 
0&0&\frac{1}{2}\\
\frac{1}{2}&0&0\end{pmatrix}
\end{equation}
is irreducible with $\dimaff(A_1,A_2) \in (1,2)$ and has exactly two ergodic K\"aenm\"aki measures.

These examples leave open the question of whether or not strong irreducibility is sufficient for the uniqueness of $\varphi^s$-equilibrium states and K\"aenm\"aki measures in dimensions higher than three. The purpose of this article is to show that in four dimensions strong irreducibility does \emph{not} suffice for the uniqueness of $\varphi^s$-equilibrium states. We give the following example:
\begin{theorem}\label{th:ex1}
Let $\alpha_1,\alpha_2$ be nonzero real numbers such that $|\alpha_1| \neq |\alpha_2|$ and let $\theta \in \mathbb{R}\setminus \frac{\pi}{2}\mathbb{Z}$. Let $(A_1,A_2) \in \GL_4(\mathbb{R})^2$ where
\[A_1=\begin{pmatrix} \alpha_1 & 0\\ 0&\alpha_2 \end{pmatrix} \otimes \begin{pmatrix} \cos \theta & -\sin \theta\\ \sin \theta &\cos\theta\end{pmatrix}=\begin{pmatrix}\alpha_1 \cos\theta &-\alpha_1\sin\theta & 0 &0\\
\alpha_1\sin\theta & \alpha_1\cos\theta & 0 & 0\\
0&0&\alpha_2\cos\theta & -\alpha_2\sin\theta \\ 
0&0& \alpha_2\sin\theta & \alpha_2\cos\theta
\end{pmatrix}\]
and
\[A_2=\begin{pmatrix} \cos \theta & -\sin \theta\\ \sin \theta &\cos\theta\end{pmatrix}\otimes \begin{pmatrix} \alpha_1 & 0\\ 0&\alpha_2 \end{pmatrix}=\begin{pmatrix}
\alpha_1\cos\theta & 0 & -\alpha_1\sin\theta & 0 \\
0&\alpha_2\cos\theta & 0 &-\alpha_2\sin\theta \\
\alpha_1\sin\theta &0 &\alpha_1\cos\theta & 0 \\
0&\alpha_2\sin\theta & 0&\alpha_2\cos\theta\end{pmatrix}.\]
Then $(A_1,A_2)$ is strongly irreducible and for every $s \in (1,3)$ there exist exactly two distinct ergodic $\varphi^s$-equilibrium states for $(A_1,A_2)$. These equilibrium states are both fully supported on $\Sigma_N$.
\end{theorem}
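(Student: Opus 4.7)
The proof hinges on an orthogonal symmetry and a $\GL_2\otimes\GL_2$-equivariant decomposition of $\wedge^2\mathbb{R}^4$. The involution $\tau:\mathbb{R}^2\otimes\mathbb{R}^2\to\mathbb{R}^2\otimes\mathbb{R}^2$ sending $v\otimes w\mapsto w\otimes v$ is orthogonal and satisfies $\tau A_1\tau^{-1}=A_2$, so $\tau$ preserves operator norms of all word products and will exchange the two equilibrium states to be produced. For strong irreducibility, observe that $|\alpha_1|\neq|\alpha_2|$ together with $\theta\notin\pi\mathbb{Z}$ forces $A_1$ to have exactly two proper nontrivial real-invariant subspaces $V_1,V_2$ (the two $2$-dimensional blocks on which $A_1$ acts as $\alpha_i R$), while $A_2$ has $W_j=\tau(V_j)$ and $V_i\neq W_j$ for all $i,j$, which already gives irreducibility. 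If some finite union $\bigcup_k U_k$ of proper subspaces were preserved by $\langle A_1,A_2\rangle$, each $U_k$ would be invariant under powers of both generators; this constrains the $U_k$'s to a short explicit list (supplemented when $\theta/\pi$ is rational by extra invariants of $A_i^n$ arising from $R^n=\pm I$), and a dimension-wise case analysis using $|\alpha_1|\neq|\alpha_2|$ and the non-coincidence $V_i\neq W_j$ rules out nontrivial joint permutation.

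The decisive structural input is the canonical decomposition
\[
\wedge^2(\mathbb{R}^2\otimes\mathbb{R}^2)\;=\;\bigl(\wedge^2\mathbb{R}^2\otimes\mathrm{Sym}^2\mathbb{R}^2\bigr)\,\oplus\,\bigl(\mathrm{Sym}^2\mathbb{R}^2\otimes\wedge^2\mathbb{R}^2\bigr)\;=:\;S_1\oplus S_2,
\]
of dimensions $3+3$, preserved by both $\wedge^2 A_1$ and $\wedge^2 A_2$. A direct computation gives $\wedge^2 A_1|_{S_1}=\alpha_1\alpha_2\,\mathrm{Sym}^2 R$ and $\wedge^2 A_1|_{S_2}=\mathrm{Sym}^2 D$, with $\wedge^2 A_2$ swapping these roles; thus the restrictions of $(\wedge^2 A_1,\wedge^2 A_2)$ to $S_1$ and $S_2$ differ only by the generator transposition $A_1\leftrightarrow A_2$. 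An eigen-analysis using that none of $e_1^2,e_1e_2,e_2^2$ is a real eigenvector of $\mathrm{Sym}^2 R$ when $\theta\notin\frac{\pi}{2}\mathbb{Z}$, combined with $\alpha_1^2\neq\alpha_2^2$, shows $(\mathrm{Sym}^2 D,\mathrm{Sym}^2 R)$ is strongly irreducible on $\mathbb{R}^3\cong S_j$.

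For $s\in(1,2)$ write $\varphi^s(A)=\|A\|^{2-s}\|\wedge^2 A\|^{s-1}$ with $\|\wedge^2 A\|=\max_j\|\wedge^2 A|_{S_j}\|$. The elementary inequality $\max(a,b)^{s-1}\asymp a^{s-1}+b^{s-1}$ (valid for $s-1\in(0,1)$) gives
\[
\sum_{|\omega|=n}\varphi^s(A_\omega)\;\asymp\;\sum_{j=1,2}\sum_{|\omega|=n}\|A_\omega\|^{2-s}\|\wedge^2 A_\omega|_{S_j}\|^{s-1},
\]
hence $P_{\varphi^s}(A_1,A_2)=\max_j P_j(s)$, where $P_j(s)$ is the pressure of the branch potential $\psi_j^s(A):=\|A\|^{2-s}\|\wedge^2 A|_{S_j}\|^{s-1}$, and $P_1(s)=P_2(s)$ by $\tau$-symmetry. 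Granting a unique ergodic equilibrium state $\mu_j$ for each $\psi_j^s$ and the relation $\mu_2=\tau_*\mu_1$, these $\mu_1,\mu_2$ are exactly the $\varphi^s$-equilibrium states for $(A_1,A_2)$. Distinctness $\mu_1\neq\mu_2$ is verified by evaluating the branch Lyapunov exponents at Bernoulli $(p,1-p)$ measures with $p$ near $1$: one has $\chi_{2,2}-\chi_{2,1}\to\log(|\alpha_1|/|\alpha_2|)>0$, so the $\psi_2^s$-optimizer strictly dominates on $S_2$ and cannot be $\tau$-fixed. The cases $s=2$ and $s\in(2,3)$ are analogous, using $\varphi^2(A)=\|\wedge^2 A\|$ (reducing directly to a $3$-dimensional uniqueness question on each $S_j$) and $\varphi^s(A)=\|\wedge^2 A\|^{3-s}\|\wedge^3 A\|^{s-2}$, noting that $\wedge^3 A_i$ inherits the Kronecker structure $\propto D^{-1}\otimes R^{-1}$ and is therefore strongly irreducible, so the only reducibility continues to come from $\wedge^2 A$. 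Full support follows from the Gibbs property of each branch equilibrium state, a standard consequence of irreducibility.

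The principal obstacle is proving uniqueness of the equilibrium state for each branch potential $\psi_j^s$: it is a product of singular-value norms from two different representations and is not itself a singular-value function $\varphi^{s'}$, so one cannot directly invoke the existing $3$-dimensional uniqueness theorems of K\"aenm\"aki--Morris. The plan is to either recast $\psi_j^s$ as the $\varphi^{s'}$-cocycle of a tensor-power combination on $(\mathbb{R}^4)^{\otimes p}\otimes S_j^{\otimes q}$ with $p/(p+q)$ rationally approximating $2-s$ (converting the product-of-norms into a single top-singular-value, to which $3$-dimensional uniqueness applies for each $S_j$), or to adapt the K\"aenm\"aki--Morris uniqueness framework to joint potentials, leveraging the already-established strong irreducibility of the standard and restricted representations.
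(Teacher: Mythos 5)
Your structural observation is the right one, and it coincides with the paper's: the identity $\wedge^2(V\otimes W)=(\wedge^2V\otimes\mathrm{Sym}^2W)\oplus(\mathrm{Sym}^2V\otimes\wedge^2W)$ packaged as $S_1\oplus S_2$ is exactly the paper's observation that $\sigma_2(B\otimes B')=\max\{\sigma_1(B)\sigma_2(B'),\sigma_2(B)\sigma_1(B')\}$, and your branch potentials $\psi_j^s$ agree with the paper's $\Phi^{(1)},\Phi^{(2)}$ after simplification. The equality $P_{\varphi^s}=\max_j P_j$, the $\tau$-symmetry $P_1=P_2$, and the deduction that the ergodic $\varphi^s$-equilibrium states are exactly the equilibrium states of the two branches also match the paper. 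But there is a genuine and decisive gap at the point you yourself flag as the ``principal obstacle.'' You concede that you do not know how to prove uniqueness of the equilibrium state for each $\psi_j^s$, because it is a product of norms from two different representations rather than a single $\varphi^{s'}$. The paper closes exactly this gap with an off-the-shelf theorem of Bochi and Morris \cite{BoMo18} (stated in the paper as Theorem~\ref{th:x}): any potential of the form $\Phi(\iii)=\prod_j\|A^{(j)}_\iii\|^{\beta_j}$ with each tuple $(A^{(j)}_1,\dots,A^{(j)}_N)$ strongly irreducible has a unique, ergodic, Gibbs equilibrium state. This applies directly to your $\psi_j^s$ once strong irreducibility of the $S_j$-restrictions is in hand, with no tensor-power rational approximation or framework redevelopment needed. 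Neither of your proposed workarounds is carried out, and the first one (rational approximation of $2-s$ by $p/(p+q)$ acting on $(\mathbb{R}^4)^{\otimes p}\otimes S_j^{\otimes q}$) would additionally need a passage-to-the-limit argument for equilibrium states and an irreducibility analysis of high tensor powers, both nontrivial.

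Two secondary points. First, your distinctness argument is not complete as stated: knowing that branch Lyapunov exponents differ at Bernoulli measures near $\delta_1$ does not by itself establish $\mu_1\neq\mu_2$, since $\mu_1,\mu_2$ are not Bernoulli. The paper instead supposes $\mu_1=\mu_2$, uses the Gibbs bounds from Theorem~\ref{th:x} to conclude $\Phi^{(1)}(\iii)/\Phi^{(2)}(\iii)$ is uniformly bounded, and then contradicts this via Benoist's theorem (Proposition~\ref{prop.benoist}), which supplies a word $\iii_0$ with $\lambda_1/\lambda_2(B_{\iii_0})\neq\lambda_1/\lambda_2(B_{\iota(\iii_0)})$, making $\Phi^{(1)}(\iii_0^n)/\Phi^{(2)}(\iii_0^n)$ grow exponentially. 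Your eigenvalue-ratio heuristic points in the right direction but needs this Gibbs-and-proximality scaffolding to land. Second, your strong-irreducibility argument is a plausible case-by-case sketch specific to this example, whereas the paper deduces strong irreducibility of $(A_1,A_2)$ and of the $S_j$-restrictions from Zariski density of the group generated by $\{(\bar{B}_i,\bar{B}_{\iota(i)})\}$ in $\PGL_2(\mathbb{R})\times\PGL_2(\mathbb{R})$ (Lemma~\ref{lemma.zdense}, using Goursat's lemma and the automorphism structure of $\PGL_d$), which also covers the more general Theorem~\ref{th:main}; your sketch would need to be completed and would not generalise.
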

Here the symbol $A\otimes B$ represents the Kronecker product of the two matrices $A$ and $B$, which is a standard mechanism for representing the tensor product of two linear maps in terms of their matrices; for a more detailed description see \S\ref{se:prelim} below. The fact that the equilibrium states are fully supported will be easily obtained during the proof, but also follows from the far more general results of \cite{BoMo18}.

Theorem \ref{th:ex1} arises as a special case of a substantially more general result whose statement requires some additional notation and definitions; we postpone the statement of this more general theorem until the following section. The precise choice of the two matrices in Theorem \ref{th:ex1} incorporates some arbitrary elements so as to facilitate the proof of Theorem \ref{th:ex2} below. In fact, we will see in the remarks following Theorem \ref{th:main} below that almost any contracting pair of matrices of the form $A_1:=B_1\otimes B_2$, $A_2:=B_2\otimes B_1$ with $B_1,B_2 \in \GL_2(\mathbb{R})$ would suffice just as well.

Theorem \ref{th:ex1} implies the existence of strongly irreducible affine iterated function systems in four dimensions where there exists more than one fully-supported measure on the attractor with maximal dimension:
\begin{theorem}\label{th:ex2}
Let $(B_1,B_2,B_3,B_4)=(A_1,A_1,A_2,A_2) \in \GL_4(\mathbb{R})^4$ where $A_1$ and $A_2$ are as defined in Theorem \ref{th:ex1} with $0<\alpha_2<\alpha_1<\frac{1}{1+\sqrt{\frac{3}{2}}}$, $\alpha_1\alpha_2> \frac{1}{16}$, and arbitrary $\theta \in \mathbb{R} \setminus \frac{\pi}{2}\mathbb{Z}$. Then there exists $(v_1,\ldots,v_4) \in (\mathbb{R}^4)^4$ such that the iterated function system defined by $T_ix:=B_ix+v_i$ satisfies the strong separation condition, has $1<\dimaff(B_1,\ldots,B_4)<2$, and admits two mutually singular invariant measures $m_1:=\Pi_*\mu_1$, $m_2:=\Pi_*\mu_2$ with Hausdorff dimension equal to $\dimaff(B_1,\ldots,B_4)$, each of which is fully supported on the attractor.
\end{theorem}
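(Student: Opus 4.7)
The plan is to reduce Theorem~\ref{th:ex2} to Theorem~\ref{th:ex1} via a natural lifting of equilibrium states, and to exhibit translation vectors that simultaneously realise strong separation and the Jordan--Pollicott--Simon dimension formula.

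\emph{Computing the affinity dimension.} Since $B_1 = B_2 = A_1$ and $B_3 = B_4 = A_2$, each word $B_{j_1}\cdots B_{j_n}$ coincides with an $A$-word $A_{i_1}\cdots A_{i_n}$ realised by $2^n$ distinct $j$-patterns, giving
\[ P_{\varphi^s}(B_1,\ldots,B_4) \;=\; \log 2 + P_{\varphi^s}(A_1,A_2). \]
The singular values of both $A_1$ and $A_2$ are $\alpha_1,\alpha_1,\alpha_2,\alpha_2$, so $\|A_i\|=\alpha_1$ and $|\det A_i|=(\alpha_1\alpha_2)^2$. Subadditivity and $\alpha_1 < 1/(1+\sqrt{3/2}) < 1/2$ yield $P_{\varphi^2}(B_1,\ldots,B_4) \le \log 4 + 2\log\alpha_1 < 0$, while the bound $\|A_{i_1}\cdots A_{i_n}\| \ge (\alpha_1\alpha_2)^{n/2}$ combined with $\alpha_1\alpha_2 > 1/16$ gives $P_{\varphi^1}(B_1,\ldots,B_4) \ge 2\log 2 + \tfrac12 \log(\alpha_1\alpha_2) > 0$. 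Thus $s_0 := \dimaff(B_1,\ldots,B_4) \in (1,2)$.

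\emph{Lifting the K\"aenm\"aki measures.} Let $\pi : \Sigma_4 \to \Sigma_2$ denote the factor map sending $\{1,2\} \to 1$ and $\{3,4\} \to 2$. For any shift-invariant $\nu$ on $\Sigma_2$, let its \emph{Bernoulli lift} $\tilde\nu$ on $\Sigma_4$ be the unique shift-invariant measure with $\pi_*\tilde\nu = \nu$ whose conditional fibres are independent uniform on the appropriate pair $\{2i-1,2i\}$. Because $B_{j_k} = A_{\pi(j)_k}$, the matrix products $B_{j_1}\cdots B_{j_n}$ depend only on $\pi(j)$, so $\int \log\varphi^s(B_{j_1}\cdots B_{j_n})\,d\tilde\nu = \int\log\varphi^s(A_{i_1}\cdots A_{i_n})\,d\nu$; and the Abramov--Rokhlin formula gives $h(\tilde\nu) = h(\nu) + \log 2$. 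Combined with the pressure identity above, the map $\nu \mapsto \tilde\nu$ sends ergodic $\varphi^s$-equilibrium states of $(A_1,A_2)$ injectively to ergodic $\varphi^s$-equilibrium states of $(B_1,\ldots,B_4)$. Invoking Theorem~\ref{th:ex1} at $s = s_0 \in (1,3)$, we obtain two distinct, fully supported ergodic $\varphi^{s_0}$-equilibrium states $\nu_1,\nu_2$ for $(A_1,A_2)$, whose lifts $\mu_i := \tilde\nu_i$ are two distinct, fully supported ergodic K\"aenm\"aki measures for $(B_1,\ldots,B_4)$.

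\emph{Choosing the translations.} Within a three-dimensional affine subspace of $\mathbb{R}^4$, place four points $v_1^0,\ldots,v_4^0$ at the vertices of a regular tetrahedron of circumradius $(1-\alpha_1)R$ centred at the origin, for some $R > 0$. With $X := \overline{B_R(0)}$ one has $T_i^0(X) \subset \overline{B_{\alpha_1 R}(v_i^0)} \subset X$, and the edge length $\sqrt{8/3}(1-\alpha_1)R$ exceeds $2\alpha_1 R$ precisely when $\alpha_1 < 1/(1+\sqrt{3/2})$, so strong separation holds at $(v_1^0,\ldots,v_4^0)$. Strong separation is an open condition on $(v_1,\ldots,v_4)$, while $\max_i\|B_i\| = \alpha_1 < 1/2$ lets us invoke the Jordan--Pollicott--Simon theorem \cite{JoPoSi07}: for each $i=1,2$, the identity $\dimh(\Pi_*\mu_i) = \min(\dimlyap(\mu_i),4)$ holds on a set of translation vectors of full Lebesgue measure. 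The intersection of these two full-measure sets with the non-empty open SSC set is non-empty, and we pick $(v_1,\ldots,v_4)$ from it.

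\emph{Conclusion.} For this choice, $m_i := \Pi_*\mu_i$ satisfies $\dimh(m_i) = \dimlyap(\mu_i) = s_0 = \dimaff(B_1,\ldots,B_4)$ since $\mu_i$ is a K\"aenm\"aki measure and $s_0 < 2 < 4$. Strong separation makes $\Pi$ a homeomorphism, so the two distinct ergodic measures $\mu_1,\mu_2$ push forward to distinct ergodic, hence mutually singular, measures on the attractor; and full support of $\mu_i$ on $\Sigma_4$ transfers to full support of $m_i$ on $Z$. The main subtlety lies in the third step: simultaneously ensuring strong separation and the Jordan--Pollicott--Simon dimension formula for both measures. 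Since the former is an open condition and the latter is of full Lebesgue measure, the two can be realised together, and the proof is complete.
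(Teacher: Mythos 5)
Your proof is correct, but the key step---producing the two K\"aenm\"aki measures for the four-map system---takes a genuinely different route from the paper. The paper observes that $(A_1,A_1,A_2,A_2)\in\GL_4(\mathbb{R})^4$ itself satisfies the hypotheses of Theorem~\ref{th:main} (taking the $2\times 2$ blocks $(B_1,B_1,B_2,B_2)$ and the permutation $\iota$ swapping $\{1,2\}\leftrightarrow\{3,4\}$) and applies that theorem directly to get \emph{exactly} two ergodic $\varphi^{s_0}$-equilibrium states. You instead exploit the factor map $\pi\colon\Sigma_4\to\Sigma_2$, the pressure identity $P_{\varphi^s}(B_1,\ldots,B_4)=\log 2+P_{\varphi^s}(A_1,A_2)$, and the Bernoulli lift $\nu\mapsto\tilde\nu$ (equivalently, the splitting $\Sigma_4\cong\Sigma_2\times\{0,1\}^{\mathbb{N}}$ with $\tilde\nu=\nu\times\beta_{1/2}$) to carry the two equilibrium states of $(A_1,A_2)$ given by Theorem~\ref{th:ex1} at $s=s_0$ up to $(B_1,\ldots,B_4)$. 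The entropy, pressure, and full-support bookkeeping all check out, and ergodicity of $\tilde\nu$ follows since it is a product of an ergodic measure with a mixing one, though it would be worth saying that explicitly rather than attributing it to Abramov--Rokhlin (which only gives the entropy count). Your lifting argument is arguably cleaner in that it avoids re-verifying the Zariski-density and conjugacy hypotheses of Theorem~\ref{th:main} for the four-tuple, and it makes the dynamical structure (a relatively Bernoulli extension) transparent; what it gives up is the sharp count ``exactly two,'' which the statement of Theorem~\ref{th:ex2} does not require. The remaining steps---the affinity-dimension bounds, the tetrahedral translation vectors (your explicit configuration and the paper's are essentially the same regular tetrahedron), the appeal to Jordan--Pollicott--Simon, and the intersection of the open SSC set with the full-measure JPS set---match the paper's proof.
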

Examples of affine iterated function systems with two fully-supported measures of maximal dimension were previously constructed in two dimensions by A. K\"aenm\"aki and M. Vilppolainen \cite[Example 6.2]{KaVi10} and by J. Barral and D.-J. Feng \cite{BaFe11}; in these examples the linear parts of the affinities are given by diagonal matrices, and in particular these examples are not irreducible.

Theorem \ref{th:ex1} may be considered to have the following heuristic implication for the investigation of affine iterated function systems. Works which attempt to prove very general statements about the thermodynamic formalism of affine iterated function systems -- that is, assuming only invertibility and perhaps irreducibility of the linear parts of the  system -- can encounter the problem that the number of distinct $\varphi^s$-equilibrium states may in general be very large, forcing any complete mathematical argument to deteriorate into a branching investigation of sub-cases arising from the families of different $\varphi^s$-equilibrium states which may exist for a single iterated function system. (Indeed, in the articles \cite{BoMo18} and \cite{MoSe19} this phenomenon is responsible for most of the length of the proofs of the main results; in those special cases where a unique $\varphi^s$-equilibrium state exists the proofs of the results of both articles can be made an order of magnitude shorter.) It is therefore natural to ask whether a simple, testable general condition can be imposed on an affine IFS which forces the $\varphi^s$-equilibrium state to be unique, allowing simpler and more economical arguments to be applied without any very substantial loss of generality. While it was shown in \cite{KaMo18} that the Zariski density of the semigroup generated by the linear parts is sufficient for the uniqueness of the $\varphi^s$-equilibrium state (see also \cite{JaJaMaLi16} for a closely related result) this condition is arguably the strongest possible condition of an algebraic nature and it is reasonable to ask whether a weaker condition such as strong irreducibility might instead be sufficient. The results of this article demonstrate that this is not the case and strongly suggest that proofs which incorporate the consideration of multiple inequivalent $\varphi^s$-equilibrium states are likely to remain a feature of the literature in situations where statements assuming a weaker condition than Zariski density are proved.

%
%

\section{More general examples}\label{se:two}

Theorem \ref{th:ex1} is obtained as a special case of a more general construction which we now describe. We first establish some necessary notation and definitions. We recall that $\PGL_d(\mathbb{R})$ denotes the quotient of $\GL_d(\mathbb{R})$ by the subgroup consisting of all scalar multiples of the identity matrix. For an element $g \in \GL_d(\mathbb{R})$ we denote by $\bar{g}$ the corresponding equivalence class $\bar{g}\in \PGL_d(\mathbb{R})$. For the purpose of exposition, let $G$ denote either of the groups $\SL_d(\mathbb{R})$, $\GL_d(\mathbb{R})$ and $\PGL_d(\mathbb{R})$. These groups are \emph{linear algebraic groups}: each $G$ can be realised as the set of common zeros of an ideal of polynomials with real coefficients in $k$ variables for some $k \in \mathbb{N}$. 
The Zariski topology on each such group $G$ is defined by declaring the closed subsets of $G$ to be the sets of common zeros of collections of polynomials in $k$ variables. This topology does not depend on the choice of the embedding in the space $\mathbb{R}^k$ and it is coarser than the standard topology on $G$, which we refer to as the  \emph{analytic topology}. For example, $\PGL_d(\mathbb{R})$ is connected with respect to the Zariski topology, whereas for even $d$ it has two connected components with respect to the analytic topology, one corresponding to linear maps with positive determinant (which is equal to $\PSL_d(\mathbb{R})$) and one corresponding to linear maps with negative determinant. A set $Z\subseteq G$ is called \emph{Zariski dense} in $G$ if it is a dense subset of $G$ with respect to the Zariski topology in the usual sense; this is equivalent to the stipulation that every polynomial function $G \to \mathbb{R}$ which is identically zero on $Z$ is also identically zero on $G$. The Zariski closure of any subsemigroup of $G$ is a Lie group with finitely many connected components. In particular a subsemigroup of $G$ fails to be Zariski dense if and only if it is contained in a proper Lie subgroup of $G$ which has finitely many connected components.

If $N \geq 2$ is understood, we will say that a \emph{word} is any finite sequence $\iii=(i_k)_{k=1}^n \in \{1,\ldots,N\}^n$. We define the \emph{length} of the word $\iii=(i_k)_{k=1}^n$ to be $n$ and denote the length of any word $\iii$ by $|\iii|$. We denote the set of all words by $\Sigma_N^*$. If $(A_1,\ldots,A_N) \in \GL_d(\mathbb{R})^N$ is also understood then we define $A_\iii:=A_{i_1}\cdots A_{i_n}$ for every $\iii=(i_k)_{k=1}^n \in \Sigma_N^*$. If $\iota \colon \{1,\ldots,N\} \to \{1,\ldots,N\}$ is a permutation then $\iota$ naturally extends to a map $\iota \colon \Sigma_N^* \to \Sigma_N^*$ defined by $\iota\left[(i_k)_{k=1}^n\right]:=(\iota(i_k))_{k=1}^N$. Clearly $\iota$ thus defined induces a permutation of the set $\{\iii \in \Sigma_N^* \colon |\iii|=n\}$ for each $n \geq1$.

The general result of which Theorem \ref{th:ex1} is a special case is as follows:
\begin{theorem}\label{th:main}
Let $d,N \geq 2$, let $(B_1,\ldots,B_N) \in \GL_d(\mathbb{R})^N$ and suppose that the group generated by the projective linear maps $\bar{B}_i$ is Zariski dense in $\PGL_d(\mathbb{R})$. Let $\iota$ be a permutation of $\{1,\ldots,N\}$ such that:
\begin{enumerate}[(i)]
\item\label{ass1} There does not exist $h \in \PGL_d(\mathbb{R})$  such that for every $\iii \in \Sigma_N^*$, we have $\bar{B}_\iii=h\bar{B}_{\iota(\iii)}h^{-1}$.
\item\label{ass2} There does not exist $h \in \PGL_d(\mathbb{R})$ such that for every $\iii \in \Sigma_N^*$, we have $\bar{B}_\iii=h(\bar{B}_{\iota(\iii)}^{-1})^\top h^{-1}$.
\end{enumerate}
Define an $N$-tuple $(A_1,\ldots,A_N) \in \GL_{d^2}(\mathbb{R})^{N}$ by $A_i:=B_i \otimes B_{\iota(i)}$ for each $i=1,\ldots,N$. Then $(A_1,\ldots,A_N)$ is strongly irreducible and for every $s \in (1,2] \cup [d^2-2,d^2-1)$ there exist exactly two distinct ergodic $\varphi^s$-equilibrium states for $(A_1,\ldots,A_N)$, both of which are both fully supported on $\Sigma_N$. 
\end{theorem}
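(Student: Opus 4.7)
The proof factors through a group-theoretic claim about subgroups of $\PGL_d(\mathbb{R})\times\PGL_d(\mathbb{R})$, followed by an application of the exterior-power criterion for the number of $\varphi^s$-equilibrium states developed in the preliminaries.

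First I would show that the subgroup $\Gamma\subset\PGL_d(\mathbb{R})\times\PGL_d(\mathbb{R})$ generated by $\{(\bar{B}_i,\bar{B}_{\iota(i)}):1\leq i\leq N\}$ is Zariski dense. Both coordinate projections of $\Gamma$ coincide with the subgroup generated by $\bar{B}_1,\ldots,\bar{B}_N$, which by hypothesis is Zariski dense in $\PGL_d(\mathbb{R})$. A Goursat-type argument for the (almost) simple algebraic group $\PGL_d$ shows that the Zariski closure of such a subgroup is either the full product or else the graph of an algebraic automorphism of $\PGL_d$. The automorphism group of $\PSL_d$ consists of inner automorphisms $g\mapsto h^{-1}gh$ together with their compositions with the outer automorphism $g\mapsto (g^{-1})^\top$. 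Hypothesis \eqref{ass1} rules out inclusion in the graph of an inner automorphism, while hypothesis \eqref{ass2} rules out inclusion in the graph of an outer automorphism; since the Zariski density of the generated group of $\bar B_i$ implies that $\{\bar B_{\iii}:\iii\in\Sigma_N^*\}$ is Zariski dense in $\PGL_d(\mathbb{R})$, no proper graph can contain $\Gamma$, and hence $\Gamma$ is Zariski dense.

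Since the tensor product representation $(g,h)\mapsto g\otimes h$ of $\PGL_d(\mathbb{R})\times\PGL_d(\mathbb{R})$ on $\mathbb{R}^{d^2}$ is irreducible, Zariski density of $\Gamma$ immediately yields strong irreducibility of $(A_1,\ldots,A_N)$. For the second exterior power, the classical $\GL_d\times\GL_d$-equivariant isomorphism
\[
\Lambda^2(\mathbb{R}^d\otimes\mathbb{R}^d)\;\cong\;\bigl(\operatorname{Sym}^2\mathbb{R}^d\otimes\Lambda^2\mathbb{R}^d\bigr)\oplus\bigl(\Lambda^2\mathbb{R}^d\otimes\operatorname{Sym}^2\mathbb{R}^d\bigr)
\]
splits $\Lambda^2\mathbb{R}^{d^2}$ into two invariant summands, and by Zariski density of $\Gamma$ each summand is strongly irreducible under $(A_1,\ldots,A_N)$ and together they exhaust the minimal invariant subspaces of $\Lambda^2\mathbb{R}^{d^2}$.

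At this point I would invoke the exterior-power criterion for $\varphi^s$-equilibrium states from \S\ref{se:prelim} (building on \cite{FeKa11,KaMo18,BoMo18}). For $s\in(1,2]$ the relevant geometric data live on $\Lambda^1\mathbb{R}^{d^2}$ and $\Lambda^2\mathbb{R}^{d^2}$: when $\Lambda^1$ is strongly irreducible and $\Lambda^2$ decomposes into exactly two strongly irreducible invariant summands, the criterion yields one ergodic $\varphi^s$-equilibrium state per summand, giving exactly two in total. The range $s\in[d^2-2,d^2-1)$ follows by the standard duality $A_i\leftrightarrow(A_i^{-1})^\top$: the dual tuple is $\bigl(B_i^{-\top}\otimes B_{\iota(i)}^{-\top}\bigr)$, whose building blocks still satisfy the hypotheses of the theorem because Zariski density and conditions \eqref{ass1}, \eqref{ass2} are each preserved by $g\mapsto(g^{-1})^\top$. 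Full support of both equilibrium states follows from strong irreducibility via \cite{BoMo18}, as already noted in the introduction.

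The main obstacle is the first step. Establishing Zariski density of $\Gamma$ in $\PGL_d(\mathbb{R})\times\PGL_d(\mathbb{R})$ requires a careful Goursat-type argument in the algebraic category that correctly handles the possible non-connectedness of $\PGL_d(\mathbb{R})$ in the analytic topology, and one must verify that the word-level formulations of hypotheses \eqref{ass1} and \eqref{ass2}, taken over all $\iii\in\Sigma_N^*$ rather than merely over generators, precisely exclude all proper Zariski closures compatible with the prescribed projections onto each factor. After this algebraic input is secured, the remaining thermodynamic content is an essentially direct application of the framework already developed in the paper.
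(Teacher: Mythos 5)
Your algebraic part is essentially the paper's: Zariski density of $\Gamma\subset\PGL_d(\mathbb{R})\times\PGL_d(\mathbb{R})$ via Goursat's lemma plus the classification of automorphisms of $\PGL_d(\mathbb{R})$, then irreducibility of the tensor product representation restricted to $\Gamma$, and the duality $g\mapsto (g^{-1})^\top$ to pass from $s\in(1,2]$ to $s\in[d^2-2,d^2-1)$. All of that is sound and matches the paper. Your identification of the decomposition $\Lambda^2(\mathbb{R}^d\otimes\mathbb{R}^d)\cong(\operatorname{Sym}^2\mathbb{R}^d\otimes\Lambda^2\mathbb{R}^d)\oplus(\Lambda^2\mathbb{R}^d\otimes\operatorname{Sym}^2\mathbb{R}^d)$ is also the right structural observation: unwinding it gives precisely the paper's key identity $\varphi^s(A_{\iii})=\max\{\Phi^{(1)}(\iii),\Phi^{(2)}(\iii)\}$, where each $\Phi^{(j)}$ is a product of norms of strongly irreducible tuples to which \cite[Corollary 2.2]{BoMo18} applies.

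The genuine gap is the thermodynamic counting step. There is no ``exterior-power criterion'' in the paper's preliminaries (nor, as far as I know, in \cite{FeKa11,KaMo18}) which says that $k$ strongly irreducible summands in $\Lambda^2$ automatically yield $k$ distinct ergodic $\varphi^s$-equilibrium states. The criterion from \cite{KaMo18} alluded to in the introduction is a \emph{uniqueness} criterion, and this theorem is precisely about the regime where it fails. Turning your decomposition into a count of exactly two requires several non-trivial steps that you do not address: (a) showing $P(\Phi^s)=P(\Phi^{(1)})=P(\Phi^{(2)})$, which here uses the fact that $\iota$ is a bijection of $\{1,\ldots,N\}^n$ for each $n$ and so permutes the summands in $\sum_{|\iii|=n}\Phi^{(j)}(\iii)$; (b) showing that every ergodic $\varphi^s$-equilibrium state is an equilibrium state of $\Phi^{(1)}$ or $\Phi^{(2)}$ (this uses the $\max$ structure and the subadditive ergodic theorem); and most importantly (c) showing that the two resulting equilibrium states $\mu_1,\mu_2$ are actually \emph{distinct}. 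Without (c) you have only proved there is at most one. The paper handles (c) by combining the Gibbs inequality from \cite[Corollary 2.2]{BoMo18} (which shows $\mu_1=\mu_2$ would force the ratio $\Phi^{(1)}(\iii)/\Phi^{(2)}(\iii)$ to be bounded) with a consequence of Benoist's theorem on limit cones: Zariski density of $\Gamma$ in $\PGL_d(\mathbb{R})\times\PGL_d(\mathbb{R})$ produces a word $\iii_0$ with $\lambda_1(B_{\iii_0})/\lambda_2(B_{\iii_0})\neq\lambda_1(B_{\iota(\iii_0)})/\lambda_2(B_{\iota(\iii_0)})$, and iterating $\iii_0$ makes the ratio unbounded. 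Your proposal omits (c) entirely, which is the decisive step in getting ``exactly two'' rather than ``at most two'' or ``exactly one''.
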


We make the following remarks:

\begin{remark}
1. Provided that the permutation $\iota$ is non-trivial, the set of tuples $(B_1,\ldots,B_N) \in \GL_d(\mathbb{R})^N$ that do not satisfy the assumptions of Theorem \ref{th:main} is contained in a null set with respect to the Haar measure on $\GL_d(\mathbb{R})^N$. When $\iota$ is non-trivial the assumptions $(i)$ and $(ii)$ hold for an open dense subset of $\GL_d(\mathbb{R})^N$.\\[3pt]
2. In practice, it is easy and usually sufficient to verify the assumptions directly on the generating tuple $(B_1,\ldots,B_N)$, that is, by considering words $\iii$ of length one in the hypotheses (\ref{ass1}) and (\ref{ass2}). \\[3pt]
3. Since the permutation $\iota$ necessarily satisfies $\iota^m=\id$ for some natural number $m \leq N!$ it is not hard to see that if there exists an $h$ satisfying $(\ref{ass1})$ then the corresponding power $h^m$ of $h$ must commute with every element of the semigroup generated by $B_1,\ldots,B_N$. By Zariski density it follows that $h^m$ belongs to the centre of $\PGL_d(\mathbb{R})$, which is the trivial group. We conclude that necessarily $h^m=\id$ for some integer $m$ not greater than $N!$. Similarly if $(\ref{ass2})$ holds then the same observation applies to $(h^{-1})^\top$ in place of $h$. 
\end{remark}


If we define
\[B_1:=\begin{pmatrix} \alpha_1 & 0\\ 0&\alpha_2 \end{pmatrix}, \qquad  B_2:=\begin{pmatrix} \cos \theta & -\sin \theta\\ \sin \theta &\cos\theta\end{pmatrix}\]
where $\alpha_1$, $\alpha_2$ and $\theta$ are as in Theorem \ref{th:ex1} and let $\iota \colon \{1,2\}\to \{1,2\}$ be given by $\iota(1):=2$, $\iota(2):=1$ then clearly the pair $(A_1,A_2)$ defined by Theorem \ref{th:main} corresponds to that considered in Theorem \ref{th:ex1}. Obviously the assumptions $(\ref{ass1})$ and $(\ref{ass2})$ are satisfied. Since $\theta \neq \pi/2$ the group generated by $B_1$ and $B_2$ is a non-elementary subgroup of $\SL_2(\mathbb{R})$ and hence is Zariski dense in $\PGL_2(\mathbb{R})$. In particular Theorem \ref{th:ex1} follows from Theorem \ref{th:main}.

We believe that it should be possible to extend the method of Theorem \ref{th:main} so as to construct examples in dimension $d^2>4$ such that for every $s \in (1,d^2-1)$ there are multiple distinct $\varphi^s$-equilibrium states. In those cases the number of ergodic equilibrium states will in general be much larger than $2$. Such a generalisation of Theorem \ref{th:main} would be likely to need additional hypotheses on the ordering of products of singular values of the matrices $B_i$ in order to make the comparison of different families of equilibrium states practical. Since the purpose of this article is simply to demonstrate that strong irreducibility is compatible with the existence of multiple $\varphi^s$-equilibrium states we do not pursue the problem of optimising Theorem \ref{th:main} in this manner.


The proof of Theorem \ref{th:main} is presented as follows. In the following section we recall some necessary concepts from linear algebra, thermodynamic formalism and the theory of linear algebraic groups. In \S\ref{se:algebraic-bit} we prove that the Zariski density hypothesis of Theorem \ref{th:main} implies the strong irreducibility of the tuple $(A_1,\ldots,A_N)$ and establish some related algebraic facts which will be used later; in \S\ref{se:thm1} we prove the remaining parts of Theorem \ref{th:main} by feeding these algebraic facts into the thermodynamic machinery described in \S\ref{se:prelim}. In \S\ref{se:rmk} we present some further perspectives on the non-uniqueness of equilibrium states in the example presented in Theorem \ref{th:ex1}, and in \S\ref{se:ex2} we give the proof of Theorem \ref{th:ex2}.

%
%

\section{Preliminaries}\label{se:prelim}

\subsection{Linear algebra}\label{subsec.lin.alg}
For the remainder of the article $\|\cdot\|$ will denote either the Euclidean norm defined by the standard inner product or a specified inner product, or the operator norm on matrices defined by such a Euclidean norm. If $A \in \GL_{d_1}(\mathbb{R})$ and $B \in \GL_{d_2}(\mathbb{R})$ are represented by the matrices
\[A=\begin{pmatrix}
a_{11}&\cdots &a_{1{d_1}}\\
\vdots & \ddots & \vdots\\
a_{d_11}& \cdots & a_{d_1d_1}
\end{pmatrix},\qquad 
B=\begin{pmatrix}
b_{11}&\cdots &b_{1d_2}\\
\vdots & \ddots & \vdots\\
b_{d_21}& \cdots & b_{d_2d_2}
\end{pmatrix},
\]
then their Kronecker product may be understood to be the linear map $A\otimes B \in \GL_{d_1d_2}(\mathbb{R})$ with matrix given by
\[A\otimes B = \begin{pmatrix} a_{11}B & \cdots &a_{1d_1} B \\
\vdots & \ddots & \vdots\\
a_{d_11}B& \cdots & a_{d_1d_1}B
\end{pmatrix}.
\]
This construction satisfies the identities $(A_1\otimes B_1)(A_2\otimes B_2)=(A_1A_2)\otimes (B_1B_2)$ and $A^\top \otimes B^\top=(A\otimes B)^\top$  for all $A_1,A_2,A \in \GL_{d_1}(\mathbb{R})$ and $B_1,B_2,B \in \GL_{d_2}(\mathbb{R})$. The identity $(A\otimes B)^{-1}=(A^{-1}\otimes B^{-1})$ follows from the first of these two  identities. If $\alpha_1,\ldots,\alpha_{d_1}$ are the eigenvalues of $A$ and $\alpha_1',\ldots,\alpha_{d_2}'$ the eigenvalues of $B$ then the eigenvalues of $A\otimes B$ are precisely the $d_1d_2$ products $\alpha_i \alpha_j'$ with $1 \leq i \leq d_1$ and $1 \leq j \leq d_2$. Combining these observations  it follows that the singular values of $A\otimes B$ are the products $\sigma_i(A)\sigma_j(B)$ such that $1 \leq i \leq d_1$ and $1 \leq j \leq d_2$ and in particular $\|A\otimes B\|=\sigma_1(A\otimes B)=\sigma_1(A)\sigma_1(B)=\|A\|\cdot\|B\|$ for all $A \in \GL_{d_1}(\mathbb{R})$ and $B \in \GL_{d_2}(\mathbb{R})$. Proofs of these identities may be found in \cite[\S4.2]{HoJo94}. The Kronecker product $A\otimes B$ may be understood algebraically as the matrix representation of the tensor product of the linear maps $A$ and $B$, but this interpretation will not be needed explicitly in the present work.
 
For each $k=1,\ldots,d$ the $k^{\mathrm{th}}$ exterior power of $\mathbb{R}^d$, denoted $\wedge^k\mathbb{R}^d$, is a ${d \choose k}$-dimensional real vector space spanned by the set of all vectors of the form $v_1\wedge v_2\wedge \cdots \wedge v_k$ where $v_1,\ldots,v_k \in \mathbb{R}^d$, where the symbol ``$\wedge$'' is subject to the identities
\[\lambda(v_1 \wedge v_2 \wedge \cdots \wedge v_k) + (v_1' \wedge v_2 \wedge \cdots \wedge v_k)= (\lambda v_1+v_1') \wedge v_2 \wedge \cdots \wedge v_k,\]
\[v_1 \wedge v_2\wedge  \cdots \wedge v_k = (-1)^{i+1}v_i \wedge v_2 \wedge \cdots \wedge v_{i-1}\wedge v_1\wedge v_{i+1}\wedge \cdots \wedge v_k\]
 for all $v_1,\ldots,v_k,v_1' \in \mathbb{R}^d$, $\lambda \in \mathbb{R}$ and $i=1,\ldots,k$.  If $u_1,\ldots,u_d$ is any basis for $\mathbb{R}^d$ then the vectors $u_{i_1}\wedge \cdots \wedge u_{i_k}$ such that $1 \leq i_1<\cdots<i_k\leq d$ form a basis for $\wedge^k\mathbb{R}^d$. The standard inner product $\langle \cdot,\cdot\rangle$ on $\mathbb{R}^d$ induces an inner product on $\wedge^k\mathbb{R}^d$ by
 \[\langle u_1\wedge \cdots \wedge u_k,v_1\wedge \cdots \wedge v_k\rangle:=\det \left[\langle u_i,v_i\rangle\right]_{i,j=1}^k.\]
If $A \in \GL_d(\mathbb{R})$ then $A$ induces a linear map $A^{\wedge k}$ on $\wedge^k\mathbb{R}^d$ by $A^{\wedge k}(u_1\wedge \cdots \wedge u_k)=Au_1 \wedge \cdots \wedge Au_k$. By considering appropriate bases it is easy to see that if $\alpha_1,\ldots,\alpha_d$ are the eigenvalues of $A$ then the eigenvalues of $A^{\wedge k}$ are the numbers $\alpha_{i_1}\cdots \alpha_{i_k}$ such that $1 \leq i_1<\cdots<i_k \leq d$. The identity $(A^\top)^{\wedge k}= (A^{\wedge k})^\top$ follows directly from the definition of the inner product on $\wedge^k\mathbb{R}^d$, and combining these observations we see that the singular values of $A^{\wedge k}$ are precisely the products $\sigma_{i_1}(A)\cdots \sigma_{i_k}(A)$ such that $1 \leq i_1<\cdots<i_k \leq d$. In particular we have $\|\wedge^kA\| \equiv \sigma_1(A^{\wedge k}) \equiv \sigma_1(A)\cdots \sigma_k(A)$. The significance of exterior powers to the present article arises from the following identity: if $A \in \GL_d(\mathbb{R})$ and $0\leq s\leq d$, then
\begin{equation}\label{eq:svd}\varphi^s(A)= \left\|A^{\wedge \lfloor s\rfloor}\right\|^{1+\lfloor s\rfloor -s} \left\|A^{\wedge \lceil s\rceil}\right\|^{s-\lfloor s\rfloor}\end{equation}
by the identity previously remarked.
 
Finally, given $A \in \GL_d(\mathbb{R})$ and $i=1,\ldots,d$ we let $\lambda_i(A)$ denote the modulus of the $i^{th}$ largest of the eigenvalues of $A$. As an easy consequence of the formula $\|A^{\wedge k}\|=\sigma_1(A)\cdots\sigma_k(A)$ together with Gelfand's formula, we have
\begin{equation}\label{eq:yamamoto}\lim_{n\to \infty} \sigma_i(A^n)^{\frac{1}{n}}=\lambda_i(A)\end{equation}
for every $A \in \GL_d(\mathbb{R})$.

\subsection{Linear algebraic groups}\label{subsub.irred}

Given a representation $\rho:\Gamma \to GL_d(\mathbb{R})$ we shall say that $\rho$ is strongly irreducible (resp.\ irreducible) if its image is strongly irreducible (resp.\ irreducible) in the sense previously defined. Given a subgroup $G$ of $\GL_d(\mathbb{R})$ with finitely many connected components which is closed in the analytic topology, it is not difficult to see that $G$ acts strongly irreducibly if and only if the connected component of the identity (in the analytic topology) acts irreducibly. When $G$ is a linear algebraic group, i.e.\ a closed subgroup of $\GL_d(\mathbb{R})$ with respect to the Zariski topology, it automatically has finitely many connected components (for both the analytic and Zariski topologies) and the previous assertion remains true for the connected component of $G$ with respect to the Zariski topology. We also mention that every linear algebraic group is a Lie group, and that the Zariski closure of a semigroup is itself a linear algebraic group. We lastly observe that a subset $S$ of $\GL_d(\mathbb{R})$ remains strongly irreducible/irreducible if each element $g \in S$ is replaced by some nonzero real scalar multiple $cg$, where  $c \in \mathbb{R}^\ast$ is allowed to depend on $g$. Accordingly we may without ambiguity speak of a subset of $\PGL_d(\mathbb{R})$ as being strongly irreducible or irreducible. 
 
In the proof of Theorem \ref{th:main} we will use a special case of a result of Y. Benoist on the properties of the limit cone of a semigroup in a reductive linear algebraic group. For these notions, as well as the following statement, we refer the reader to \cite[Th\'{e}or\`{e}me 1.4]{Be97} (see also \cite[Proposition 1.3]{Qu05} and \cite[Theorem 1.4]{BrSe18}). We observe that given a projective linear transformation $\gamma \in \PGL_d(\mathbb{R})$, the ratios $\frac{\lambda_i}{\lambda_{i+1}}(\gamma)$ of specified pairs of absolute eigenvalues of $\gamma$ are well-defined as the ratio $\frac{\lambda_i(g)}{\lambda_{i+1}(g)}$ for any $g \in \GL_d(\mathbb{R})$ with $\bar{g}=\gamma$. The result which we require is the following:
\begin{proposition}\label{prop.benoist}
Let $\Gamma$ be a Zariski-dense subsemigroup of $\PGL_d(\mathbb{R}) \times \PGL_d(\mathbb{R})$. Then there exists $(\gamma_1,\gamma_2) \in \Gamma$ such that
$$
\frac{\lambda_1}{\lambda_2}(\gamma_1) \neq \frac{\lambda_1}{\lambda_2}(\gamma_2).
$$ 
\end{proposition}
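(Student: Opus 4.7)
The plan is to obtain the statement as a direct consequence of Benoist's limit cone theorem (Th\'{e}or\`{e}me 1.4 in \cite{Be97}) applied to the reductive linear algebraic group $G := \PGL_d(\mathbb{R}) \times \PGL_d(\mathbb{R})$. Since $\Gamma$ is a Zariski-dense subsemigroup of $G$, that theorem guarantees that the limit cone $\mathcal{L}(\Gamma)$, defined as the closed cone generated by the Jordan projections $\mu(\gamma_1,\gamma_2)$ of the elements $(\gamma_1,\gamma_2) \in \Gamma$ inside the product positive Weyl chamber $\mathfrak{a}_+ \oplus \mathfrak{a}_+$, has non-empty interior there.

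First I would fix notation for a Cartan subalgebra $\mathfrak{a}$ of $\PGL_d(\mathbb{R})$ by identifying it with $\{(t_1,\ldots,t_d) \in \mathbb{R}^d : t_1 + \cdots + t_d = 0\}$, so that the Jordan projection of an element $\gamma \in \PGL_d(\mathbb{R})$ is the image in $\mathfrak{a}$ of $(\log \lambda_1(g),\ldots,\log \lambda_d(g))$ for any lift $g \in \GL_d(\mathbb{R})$ of $\gamma$ (this is well-defined on $\PGL_d(\mathbb{R})$ because it differs from any other lift by a vector proportional to $(1,\ldots,1)$, which vanishes modulo $\mathfrak{a}$). The first simple root $\alpha(\gamma) := \log \lambda_1(\gamma) - \log \lambda_2(\gamma)$ is then a non-zero linear functional on $\mathfrak{a}$, and correspondingly the functional $\beta(\gamma_1,\gamma_2) := \alpha(\gamma_1) - \alpha(\gamma_2)$ is a non-zero linear functional on the product Cartan $\mathfrak{a} \oplus \mathfrak{a}$ of $G$, whose kernel is a proper linear subspace.

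The proof then concludes by contradiction: assuming that $\alpha(\gamma_1) = \alpha(\gamma_2)$ holds for every $(\gamma_1,\gamma_2) \in \Gamma$, the functional $\beta$ vanishes on all Jordan projections of elements of $\Gamma$, hence on the closed cone $\mathcal{L}(\Gamma)$ that they generate. So $\mathcal{L}(\Gamma) \subseteq \ker \beta$ is contained in a proper linear subspace and therefore has empty interior in $\mathfrak{a}_+ \oplus \mathfrak{a}_+$, contradicting Benoist's theorem. I do not expect any substantive obstacle: once Benoist's result is granted, the entire argument is simply the translation of the hypothesis into the vanishing of a single non-trivial linear functional on the limit cone, and the cited Zariski density does all of the real work.
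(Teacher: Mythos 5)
Your argument is correct and follows exactly the route the paper takes: the paper offers no written proof of Proposition \ref{prop.benoist}, instead invoking it directly as a special case of Benoist's limit cone theorem (Th\'eor\`eme 1.4 of \cite{Be97}), and your contradiction argument -- that if $\lambda_1/\lambda_2(\gamma_1)=\lambda_1/\lambda_2(\gamma_2)$ for all $(\gamma_1,\gamma_2)\in\Gamma$ then the Jordan projections, and hence the limit cone, lie in the kernel of the nonzero linear functional $\alpha\circ\pi_1-\alpha\circ\pi_2$ on the product Cartan, forcing empty interior -- is precisely the intended reduction.
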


\subsection{Thermodynamic formalism}
If $N \geq 2$ is understood, let $\Sigma_N:=\{1,\ldots,N\}^{\mathbb{N}}$ which we equip with the infinite product topology. This topological space is compact and metrisable. We define the shift transformation $\sigma \colon \Sigma_N \to \Sigma_N$ by $\sigma[(x_k)_{k=1}^\infty]:=(x_{k+1})_{k=1}^\infty$ which is a continuous surjection. We let $\mathcal{M}_\sigma$ denote the set of all $\sigma$-invariant Borel probability measures on $\Sigma_N$ equipped with the weak-* topology, which is the smallest topology such that $\mu \mapsto \int f\,d\mu$ is continuous for every $f \in C(\Sigma_N)$. With respect to this topology $\mathcal{M}_\sigma$ is a nonempty, compact, metrisable topological space.
 
 As was described earlier we will say that a \emph{word} is any finite sequence $\iii=(i_k)_{k=1}^n \in \{1,\ldots,N\}^n$. We define the \emph{length} of the word $\iii=(i_k)_{k=1}^n$ to be $n$ and denote the length of any word $\iii$ by $|\iii|$. When $N$ is understood we denote the set of all words by $\Sigma_N^*$. If $x=(x_k)_{k=1}^\infty \in \Sigma_N$ then we define $x|_n$ to be the word $(x_k)_{k=1}^n \in \Sigma_N^*$. If $\iii \in \Sigma_N^*$ then we define the corresponding \emph{cylinder set} to be the set $[\iii]:=\{x \in \Sigma_N \colon x|_n=\iii\}$. The set of all cylinder sets is a basis for the topology of $\Sigma_N$. If $\iii=(i_k)_{k=1}^n,\jjj=(j_k)_{k=1}^m \in \Sigma_N^*$ are arbitrary words then we define their concatenation $\iii\jjj$ in the obvious fashion: it is the word $(\ell_k)_{k=1}^{n+m}$ such that $\ell_k=i_k$ for $1 \leq k \leq n$ and $\ell_k=j_{k-n}$ for $n+1 \leq k \leq n+m$. If $(A_1,\ldots,A_N) \in \GL_d(\mathbb{R})^N$ is understood then we define $A_\iii:=A_{i_1}\cdots A_{i_n}$ for every $\iii=(i_k)_{k=1}^n \in \Sigma_N^*$. 
  
We will find it convenient in proofs to appeal to more general notions of pressure and equilibrium state than those defined in the introduction. If $N \geq 2$ is understood let us say that a \emph{potential} is any function $\Phi \colon \Sigma_N^* \to (0,+\infty)$. We will say that a potential is \emph{submultiplicative} if it has the property $\Phi(\iii \jjj)\leq \Phi(\iii)\Phi(\jjj)$ for all $\iii,\jjj \in \Sigma_N^*$. All potentials considered in this article will be submultiplicative. If $\Phi$ is a submultiplicative potential then the sequence of functions $\Phi_n \colon \Sigma_N \to (0,+\infty)$ defined by $\Phi_n(x):=\Phi(x|_n)$ satisfies the submultiplicativity relation $\Phi_{n+m}(x) \leq \Phi_n(\sigma^mx)\Phi_m(x)$ for all $n,m \geq 1$ and $x \in \Sigma_N$. Each $\Phi_n$ is continuous since it depends only on finitely many co-ordinates. For every $\mu \in \mathcal{M}_\sigma$ we define
\[\Lambda(\Phi,\mu):=\lim_{n \to \infty} \frac{1}{n}\int \log \Phi(x|_n)d\mu(x) = \inf_{n \geq 1} \frac{1}{n}\int \log \Phi(x|_n)d\mu(x)\]
  which is well-defined by subadditivity. By the subadditive ergodic theorem, if $\mu \in \mathcal{M}_\sigma$ is ergodic then we have $\lim_{n \to \infty}\frac{1}{n}\log\Phi(x|_n)=\Lambda(\Phi,\mu)$ for $\mu$-a.e. $x \in \Sigma_N$.
  
  If $\Phi$ is a submultiplicative potential then we define its \emph{pressure} to be the quantity
 \[P(\Phi):=\lim_{n \to \infty} \frac{1}{n}\log \sum_{|\iii|=n}\Phi(\iii)\] 
which is well-defined by subadditivity.  By the subadditive variational principle of D.-J. Feng, Y.-L. Cao and W. Huang we have 
\[P(\Phi) = \sup_{\mu \in \mathcal{M}_\sigma}\left[ h(\mu)+\Lambda(\Phi,\mu)\right],\]
see \cite[Theorem 1.1]{CaFeHu08}). Since the map $\mu \mapsto \int\log \Phi(x|_n)\,d\mu(x)$ is continuous for each $n \geq 1$ and the map $\mu \mapsto h(\mu)$ is upper semi-continuous, the map $\mu \mapsto h(\mu)+\Lambda(\Phi,\mu)$ is upper semi-continuous. In particular the supremum above is always attained. We call a measure which attains this supremum an equilibrium state for $\Phi$. 
 
 If $(A_1,\ldots,A_N) \in \GL_d(\mathbb{R})$ and $s \geq 0$ then we may define a submultiplicative potential $\Phi^s \colon \Sigma_N \to (0,+\infty)$ by $\Phi^s(\iii):=\varphi^s(A_\iii)$. Clearly in this case $P(\Phi^s)=P_{\varphi^s}(A_1,\ldots,A_N)$ and the notion of equilibrium state for $\Phi^s$ coincides with the notion of $\varphi^s$-equilibrium state for $(A_1,\ldots,A_N)$ introduced in the introduction.  Our mechanism for studying equilibrium states in this article will be the following result which was given as \cite[Corollary 2.2]{BoMo18}:
\begin{theorem}\label{th:x}
Let $\ell \geq 1$ and $N \geq 2$. For each $j=1,\ldots,\ell$ let $d_j \geq 1$ and $\beta_j>0$, and let $(A_1^{(j)},\ldots,A_N^{(j)}) \in \GL_{d_j}(\mathbb{R})^N$ be strongly irreducible. Define a submultiplicative potential $\Phi \colon \Sigma_N^* \to (0,+\infty)$ by
\[\Phi(\iii):=\prod_{j=1}^\ell \left\|A_\iii^{(j)}\right\|^{\beta_j}\]
for all $\iii \in \Sigma_N^*$. Then there exists a unique equilibrium state $\mu$ for $\Phi$. It is ergodic, and there exists a constant $C>0$ such that
\[C^{-1}\Phi(\iii)\leq e^{|\iii|P(\Phi)}\mu([\iii])\leq C\Phi(\iii)\]
for every $\iii \in \Sigma_N^*$.
\end{theorem}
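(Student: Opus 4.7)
The plan is to reduce the theorem to establishing a simultaneous quasi-multiplicativity property of the submultiplicative potential $\Phi$: I aim to show that there exist $c>0$ and a finite set $F\subset\Sigma_N^*$ such that for every $\iii,\jjj\in\Sigma_N^*$ one can find $\kkk\in F$ with
\[\Phi(\iii\kkk\jjj)\geq c\,\Phi(\iii)\Phi(\jjj).\]
Granted this, the remainder follows a by-now standard template (due in various forms to K\"aenm\"aki, Feng, and Cao-Feng-Huang): construct $\mu$ as a weak-$\ast$ limit point of the sequence of probability measures $\mu_n$ obtained by assigning weight proportional to $\Phi(\iii)$ to each cylinder of length $n$, suitably averaged along the orbit of $\sigma$ to enforce invariance. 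The subadditive variational principle together with the definition of pressure immediately yields that $\mu$ is an equilibrium state, and quasi-multiplicativity forces the two-sided Gibbs bounds $C^{-1}\Phi(\iii)\leq e^{|\iii|P(\Phi)}\mu([\iii])\leq C\Phi(\iii)$.

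For quasi-multiplicativity I would first treat each factor separately. Fix $j$, and for each word $\iii$ let $u^{(j)}_\iii$ and $v^{(j)}_\iii$ be unit top left and right singular vectors of $A^{(j)}_\iii$. Then
\[\bigl\|A^{(j)}_{\iii\kkk\jjj}\bigr\|\geq \bigl|\langle u^{(j)}_\iii,A^{(j)}_\kkk v^{(j)}_\jjj\rangle\bigr|\cdot\bigl\|A^{(j)}_\iii\bigr\|\cdot\bigl\|A^{(j)}_\jjj\bigr\|,\]
so it suffices to produce a finite family $F_j\subset\Sigma_N^*$ such that for every pair of unit vectors $u,v$ there is some $\kkk\in F_j$ making $|\langle u,A^{(j)}_\kkk v\rangle|$ bounded away from zero. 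This is precisely what strong irreducibility buys: by a compactness argument on the product of projective spaces, the uniform bound reduces to the pointwise statement that for each fixed $u,v$ some word $\kkk$ satisfies $\langle u,A^{(j)}_\kkk v\rangle\neq 0$, which in turn holds because otherwise the orbit $\{A^{(j)}_\kkk v\}_{\kkk}$ would be trapped in the hyperplane $u^\perp$, yielding an invariant finite union of proper subspaces and contradicting strong irreducibility.

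To obtain a single $F$ working simultaneously for all $\ell$ factors, I would splice: set $F$ to consist of all concatenations $\kkk_1\cdots\kkk_\ell$ with $\kkk_j\in F_j$. Given $\iii,\jjj$, choose $\kkk_1\in F_1$ aligning factor $1$, then $\kkk_2\in F_2$ aligning factor $2$ for the perturbed pair $(\iii\kkk_1,\jjj)$, and so on. At each step the previously aligned norms are multiplied by at most $\max_{\kkk\in F_{j'}}\|A^{(j)}_\kkk\|^{\pm 1}$, which is a uniform constant since the $F_{j'}$ are all finite. Raising to the weights $\beta_j$ and multiplying across $j$ gives quasi-multiplicativity of $\Phi$. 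From there uniqueness and ergodicity follow from the Gibbs bounds by a standard argument: the upper Gibbs bound together with a Kolmogorov-Sinai partition-entropy computation forces any other equilibrium state $\nu$ to be absolutely continuous with respect to $\mu$, while the quasi-Bernoulli property implied by the two-sided Gibbs bounds yields ergodicity of $\mu$, whence $\nu=\mu$; full support is clear from the lower Gibbs bound since $\Phi>0$. The main obstacle is the splicing step: one must verify that aligning one factor does not destroy the alignment of previously aligned factors by more than a uniformly bounded multiplicative factor, and this requires careful bookkeeping of norms over the finite sets $F_j$ and a genuine use of the finite cardinality (as opposed to merely compactness) of the alignment families.
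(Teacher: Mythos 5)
This is a statement the paper does not prove: it is quoted verbatim as \cite[Corollary 2.2]{BoMo18}, so there is no internal proof to compare against. Your proposal is a genuine proof attempt, and its overall strategy (quasi-multiplicativity $\Rightarrow$ Gibbs/uniqueness via the Feng and K\"aenm\"aki--Reeve template) is the right one and is indeed the one used in \cite{BoMo18}. However, the splicing step, which you correctly identify as the main obstacle, contains a real gap rather than a bookkeeping burden.

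The problem is the claim that inserting $\kkk_2$ into the middle of $\iii\kkk_1\jjj$ multiplies the already-aligned norm $\|A^{(1)}_{\iii\kkk_1\jjj}\|$ by a controlled factor. This is false: for matrices $M,N,P$ there is no inequality of the form $\|MNP\|\geq c(N)\|MP\|$ with $c(N)$ depending only on $N$. The sharpest general lower bound is $\|MNP\|\geq \sigma_{\min}(N)\,\sigma_{\min}(P)\,\|M\|$, and the ratio $\sigma_{\min}(P)/\|P\|$ can be arbitrarily small over words $\jjj$. Concretely, the alignment achieved by $\kkk_1$ is the statement that $A^{(1)}_{\kkk_1}$ carries the top left singular direction of $A^{(1)}_\jjj$ to a direction not perpendicular to the top right singular direction of $A^{(1)}_\iii$; after you insert $A^{(1)}_{\kkk_2}$ between $A^{(1)}_{\kkk_1}$ and $A^{(1)}_\jjj$, the vector that $A^{(1)}_{\kkk_1}$ now acts on is $A^{(1)}_{\kkk_2}$ applied to that direction, which can land exactly in the bad hyperplane. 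So the per-factor alignment families $F_1,\ldots,F_\ell$ cannot be concatenated; aligning one factor can completely destroy the alignment of another, not merely degrade it by a bounded amount.

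What is actually required is \emph{simultaneous} alignment: a single finite family $F$ such that for every tuple of unit vectors $(u_1,v_1,\ldots,u_\ell,v_\ell)$ there is one word $\kkk\in F$ with $\langle u_j, A^{(j)}_\kkk v_j\rangle$ bounded away from zero for all $j$ at once. By compactness of the product of projective spaces this reduces, as in your single-factor argument, to the pointwise claim that for each fixed tuple some $\kkk$ makes every inner product nonzero. That claim does not follow by treating the factors separately; the set of $\kkk$ killed by factor $j$ is a hyperplane condition on $A^{(j)}_\kkk$, and a finite union of such conditions is no longer a subspace, so the Furstenberg-style orbit argument you invoke in the single-factor case does not apply. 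The correct route is algebraic: if $\prod_j\langle u_j,\pi_j(g)v_j\rangle$ vanished on the whole semigroup, it would vanish on its Zariski closure $H$, hence (by irreducibility of each component of $H$ as a variety) one individual factor $\langle u_j,\pi_j(g)v_j\rangle$ would vanish identically on the identity component $H^\circ$; then the $\pi_j(H^\circ)$-orbit of $v_j$ spans a proper invariant subspace, contradicting the fact that strong irreducibility of $(A^{(j)}_i)_i$ forces the identity component of the Zariski closure of its image to act irreducibly. Note this is exactly the place where strong irreducibility (rather than plain irreducibility) is used; your single-factor argument, as written, only needs irreducibility, and it is worth being aware that the extra strength of the hypothesis is consumed precisely by the simultaneous alignment.

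Two smaller remarks. First, in your bound $\|A^{(j)}_{\iii\kkk\jjj}\|\geq |\langle u^{(j)}_\iii,A^{(j)}_\kkk v^{(j)}_\jjj\rangle|\,\|A^{(j)}_\iii\|\,\|A^{(j)}_\jjj\|$ the roles of left and right singular vectors are interchanged: $u^{(j)}_\iii$ should be the top \emph{right} singular vector of $A^{(j)}_\iii$ and $v^{(j)}_\jjj$ the top \emph{left} singular vector of $A^{(j)}_\jjj$. Second, once quasi-multiplicativity is in hand your outline of the Gibbs construction, uniqueness, and ergodicity is accurate and standard.
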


\section{Proof of Theorem \ref{th:main}: the algebraic part}\label{se:algebraic-bit}

In this section we prove that the tuple $(A_1,\ldots,A_N)$ considered in Theorem \ref{th:main} is strongly irreducible and also show that $(B_1^{\wedge 2},\ldots,B_N^{\wedge 2})$ is strongly irreducible, which will be needed later in the proof. Specifically we prove the following statement:
\begin{proposition}\label{pr:key}
Let $(B_1,\ldots,B_N) \in \GL_d(\mathbb{R})^N$ and $\iota$ be as in Theorem \ref{th:main}.  Then $(B_1 \otimes B_{\iota(1)},\ldots,B_N \otimes B_{\iota(N)})$ is strongly irreducible, and for each $k=1,\ldots,d$ the tuple $(B_1^{\wedge k},\ldots,B_N^{\wedge k})$ is strongly irreducible. \end{proposition}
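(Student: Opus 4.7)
The plan rests on a general principle that I will apply twice: a subsemigroup $\Gamma$ of $\GL(V)$ acts strongly irreducibly on $V$ whenever its Zariski closure in $\GL(V)$ is Zariski-connected and acts irreducibly on $V$. Indeed, if $\Gamma$ preserved the union of some finite collection of proper subspaces $V_1,\ldots,V_m$, then the condition of permuting $\{V_1,\ldots,V_m\}$ is a Zariski-closed condition on $\GL(V)$ and would therefore extend to the Zariski closure; the induced homomorphism from the Zariski-connected closure to the finite group of permutations of $\{V_1,\ldots,V_m\}$ would then be trivial, forcing the closure to stabilise each $V_i$ and contradicting irreducibility. The same argument applies to subsemigroups of $\PGL(V)$.

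For the exterior power claim, the assignment $g\mapsto g^{\wedge k}$ induces a morphism of real algebraic groups $\rho_k\colon \PGL_d(\mathbb{R})\to \PGL(\wedge^k\mathbb{R}^d)$. The image of a Zariski-dense set under such a morphism is Zariski dense in the image, and images of algebraic groups under algebraic morphisms are closed algebraic subgroups; consequently the semigroup generated by $\bar{B}_1^{\wedge k},\ldots,\bar{B}_N^{\wedge k}$ is Zariski dense in the Zariski-connected group $\rho_k(\PGL_d(\mathbb{R}))$. The latter acts irreducibly on $\wedge^k\mathbb{R}^d$ for $1\leq k\leq d-1$ by the classical (absolute) irreducibility of the exterior power representations of $\SL_d(\mathbb{R})$, while for $k=d$ the target is one-dimensional and the statement is vacuous. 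The general principle then yields the desired strong irreducibility of $(B_1^{\wedge k},\ldots,B_N^{\wedge k})$.

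For the tensor product claim, let $H$ denote the Zariski closure in $\PGL_d(\mathbb{R})\times \PGL_d(\mathbb{R})$ of the subgroup generated by $\{(\bar{B}_i,\bar{B}_{\iota(i)})\}_{i=1}^N$. Each projection of $H$ is a closed algebraic subgroup of $\PGL_d(\mathbb{R})$ containing the Zariski-dense subgroup $\langle\bar{B}_1,\ldots,\bar{B}_N\rangle$, hence equals $\PGL_d(\mathbb{R})$. Since $\PGL_d(\mathbb{R})$ is almost simple as a Zariski-connected real algebraic group, Goursat's lemma in the algebraic setting forces $H$ to be either $\PGL_d(\mathbb{R})\times\PGL_d(\mathbb{R})$ itself or the graph of an algebraic automorphism of $\PGL_d(\mathbb{R})$. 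Every such automorphism is either inner, $g\mapsto hgh^{-1}$, or a composition of an inner automorphism with the contragredient involution, $g\mapsto h(g^{-1})^\top h^{-1}$. In the first case, the inclusion of the generating set of the semigroup in the graph yields $\bar{B}_{\iota(\iii)}=h\bar{B}_\iii h^{-1}$ for all $\iii\in\Sigma_N^*$, which after rearrangement (with $h':=h^{-1}$) contradicts hypothesis (\ref{ass1}); in the second case an analogous rearrangement (with $h':=h^{-\top}$) contradicts (\ref{ass2}). Therefore $H=\PGL_d(\mathbb{R})\times \PGL_d(\mathbb{R})$, and since the outer tensor product representation of $\GL_d(\mathbb{R})\times\GL_d(\mathbb{R})$ on $\mathbb{R}^d\otimes\mathbb{R}^d$ is absolutely irreducible, the general principle delivers the strong irreducibility of $(B_1\otimes B_{\iota(1)},\ldots,B_N\otimes B_{\iota(N)})$.

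The principal obstacle will be the clean deployment of Goursat's lemma: verifying that the projections of the Zariski closure $H$ are genuinely surjective onto $\PGL_d(\mathbb{R})$ rather than merely Zariski dense (which uses the fact that constructible subgroups of algebraic groups are closed), and checking that hypotheses (\ref{ass1}) and (\ref{ass2}) together do rule out both families of algebraic automorphisms of $\PGL_d(\mathbb{R})$ (noting that for $d=2$ the contragredient involution is inner, so (\ref{ass2}) is subsumed by (\ref{ass1}) in that case).
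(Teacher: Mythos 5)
Your proposal is correct and follows essentially the same route as the paper: a Zariski-density transfer lemma for strong irreducibility, Goursat's lemma together with the almost-simplicity of $\PGL_d(\mathbb{R})$ and the classification of its automorphisms (inner vs.\ contragredient), and the classical irreducibility of the exterior power and outer tensor product representations. The only blemish is the formula $h':=h^{-\top}$ in the contragredient case, which should read $h':=h^{\top}$, but this slip has no bearing on the validity of the argument.
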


This result will follow by combining the various lemmas given subsequently. We begin with recalling some general results.

\begin{lemma}\label{lemma.irred.zariski}
Let $G$ be a linear algebraic group and suppose that $\pi \colon G \to \GL(V)$ is a strongly irreducible representation. If $\Gamma$ is a Zariski-dense subsemigroup of $G$ then $\pi|_\Gamma$ is a strongly irreducible representation.
\end{lemma}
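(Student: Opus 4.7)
The plan is to argue by contrapositive, showing that if $\pi|_\Gamma$ fails to be strongly irreducible then neither does $\pi$; the heart of the matter is that the stabiliser in $G$ of any finite union of proper subspaces of $V$ is Zariski closed, so a Zariski-dense subsemigroup cannot preserve such a union without $G$ itself doing so.

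Concretely, I would begin by assuming that $\pi|_\Gamma$ is not strongly irreducible, producing nonzero proper subspaces $V_1,\ldots,V_m$ of $V$ with $\pi(\gamma)\bigl(\bigcup_{i=1}^m V_i\bigr)=\bigcup_{i=1}^m V_i$ for every $\gamma\in\Gamma$. Passing to a subcollection I may assume that the $V_i$ are pairwise distinct and that none is contained in the union of the others, so that a standard argument (a subspace contained in a finite union of subspaces over an infinite field must lie inside one of them, combined with dimension counting and invertibility) shows that for any invertible $g$ with $g(\bigcup V_i)=\bigcup V_i$ there is a unique permutation $\tau_g$ of $\{1,\ldots,m\}$ such that $gV_i=V_{\tau_g(i)}$ for all $i$.

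I would then define
\[H := \Bigl\{g\in G : \pi(g)\Bigl(\bigcup_{i=1}^m V_i\Bigr)=\bigcup_{i=1}^m V_i\Bigr\}\]
and check that $H$ is Zariski closed in $G$. Using the reformulation above,
\[H = \bigcup_{\tau\in\mathrm{Sym}(m)} \bigl\{g\in G : \pi(g)V_i = V_{\tau(i)} \text{ for all } i=1,\ldots,m\bigr\}.\]
Each set in this finite union is the preimage under the morphism $g\mapsto(\pi(g)V_1,\ldots,\pi(g)V_m)$ from $G$ to $\prod_{i=1}^m \mathrm{Gr}_{\dim V_i}(V)$ of the single point $(V_{\tau(1)},\ldots,V_{\tau(m)})$, so it is Zariski closed. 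Since $H$ contains $\Gamma$ and is Zariski closed, the hypothesis that $\Gamma$ is Zariski dense forces $H=G$. But this says precisely that $\pi(G)$ preserves the finite union $V_1\cup\cdots\cup V_m$ of nonzero proper subspaces, contradicting the strong irreducibility of $\pi$ and completing the proof.

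The only genuine subtlety, and the step most worth writing out carefully, is the reduction from the set-theoretic condition $\pi(g)(\bigcup V_i)=\bigcup V_i$ to the algebraic condition that $\pi(g)$ permutes an irredundant collection of subspaces; once that reduction is in hand, the Zariski-closedness of $H$ and the Zariski-density argument are essentially automatic.
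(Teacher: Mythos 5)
Your proof is correct, but it takes a genuinely different route from the paper's. The paper invokes the remark from \S\ref{subsub.irred} that for a group with finitely many Zariski components, strong irreducibility is equivalent to irreducibility of the Zariski identity component $G^o$; it then asserts that $\Gamma \cap G^o$ is Zariski dense in $G^o$, and since the stabiliser of a \emph{single} subspace is obviously Zariski closed, density forces $\pi|_{\Gamma\cap G^o}$ to be irreducible. You instead attack strong irreducibility head-on: you show that the stabiliser of an arbitrary finite union of proper subspaces is Zariski closed, which requires the preliminary reduction to an irredundant collection and the observation that any invertible map preserving such a union must permute the subspaces. Your version is more self-contained -- it avoids the identity-component reduction and does not need the (somewhat glossed-over) claim that $\Gamma\cap G^o$ is Zariski dense in $G^o$ -- at the cost of carrying out the irredundancy/permutation lemma explicitly, which is exactly the step you correctly flag as the one subtlety. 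One minor simplification you could make: once you know $\pi(g)$ must permute the irredundant $V_i$, there is no need for the Grassmannian morphism; the condition $\pi(g)V_i \subseteq V_{\tau(i)}$ is already a finite system of linear equations in the matrix entries of $\pi(g)$, and equality follows from invertibility and equal dimensions, so each piece of $H$ is cut out by polynomials directly.
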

\begin{proof}
By a previous remark on irreducibility of the (Zariski) connected component $G^o$ (\S \ref{subsub.irred}), it suffices to show that $\pi|_{\Gamma \cap G^o}$ is an irreducible representation. The semigroup $\Gamma \cap G^o$ is clearly Zariski dense in $G^o$ and the latter acts irreducibly on $V$. Since the property of preserving a subspace can be expressed in terms of polynomial equations, the same is true of $\Gamma \cap G^o$ by direct appeal to the definition of Zariski topology.
\end{proof}

We also need the following classical fact (for a proof see e.g.\ \cite{Di51}): 
\begin{lemma}\label{le:isos}
Let $\phi \colon \PGL_d(\mathbb{R}) \to \PGL_d(\mathbb{R})$ be a Lie group automorphism. Then $\phi$ has one of the following two forms: either there exists $x \in \PGL_d(\mathbb{R})$ such that $\phi(g)=xgx^{-1}$ for all $g \in \PGL_d(\mathbb{R})$, or there exists $x \in \PGL_d(\mathbb{R})$ such that $\phi(g)=x(g^\top)^{-1}x^{-1}$ for every $g \in \PGL_d(\mathbb{R})$.
\end{lemma}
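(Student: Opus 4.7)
The strategy is to pass to the Lie algebra level, apply the classical classification of automorphisms of the simple Lie algebra $\mathfrak{sl}_d(\mathbb{R})$, and then lift the resulting description back to the group.

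First I would differentiate $\phi$ at the identity to obtain a Lie algebra automorphism $\psi := d\phi_e$ of $\mathfrak{pgl}_d(\mathbb{R}) \cong \mathfrak{sl}_d(\mathbb{R})$. Since $\mathfrak{sl}_d(\mathbb{R})$ is a simple real Lie algebra of type $A_{d-1}$, the standard description of its automorphism group---obtained by analysing graph symmetries of the Dynkin diagram $A_{d-1}$, which are trivial for $d=2$ and of order two for $d\geq 3$, the nontrivial class being realised by the Chevalley involution $X \mapsto -X^\top$---forces $\psi$ to take one of the two forms
\[
\psi(X) = \Ad(x_0)(X) \quad \text{or} \quad \psi(X) = \Ad(x_0)(-X^\top), \qquad x_0 \in \PGL_d(\mathbb{R}).
\]

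Next I would lift this description to the group using naturality of the exponential. Since $\exp\colon \mathfrak{sl}_d(\mathbb{R}) \to \PGL_d(\mathbb{R})$ is a local diffeomorphism near $0$ and satisfies both $\exp(\Ad(x_0)X) = x_0 (\exp X)x_0^{-1}$ and $\exp(-X^\top) = ((\exp X)^\top)^{-1}$, the candidate identity $\phi(g) = x_0 g x_0^{-1}$ (respectively $\phi(g) = x_0(g^\top)^{-1}x_0^{-1}$) is forced on some neighbourhood of $e$. Because the identity component $G^\circ$ of $\PGL_d(\mathbb{R})$ is generated by any such neighbourhood, the formula persists on all of $G^\circ$.

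To extend from $G^\circ$ to the whole group (an issue only for even $d$, since for odd $d$ the group $\PGL_d(\mathbb{R}) = \PSL_d(\mathbb{R})$ is already connected), let $\phi_0$ denote the candidate formula already established. Then $\Psi := \phi_0^{-1} \circ \phi$ is a Lie group automorphism of $\PGL_d(\mathbb{R})$ which restricts to the identity on $G^\circ$. For any $h$ in the non-identity component and any $g \in G^\circ$, applying $\Psi$ to $hgh^{-1}\in G^\circ$ yields $hgh^{-1} = \Psi(h) g \Psi(h)^{-1}$, so $\Psi(h)^{-1}h$ commutes with every element of $G^\circ \cong \PSL_d(\mathbb{R})$. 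By Schur's lemma applied to the irreducible action of $G^\circ$ on $\mathbb{R}^d$, this centraliser is trivial in $\PGL_d(\mathbb{R})$, so $\Psi(h)=h$ and hence $\phi = \phi_0$ globally.

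The main obstacle is the Lie algebra classification step, which draws on nontrivial structural input from the theory of simple real Lie algebras (root systems, real forms, Dynkin diagram symmetries and the identification of the outer class with the Chevalley involution). By comparison, the differentiation-exponentiation passage between group and algebra, and the final centraliser argument to extend from the identity component, are routine.
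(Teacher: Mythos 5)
The paper does not prove this lemma itself; it simply cites Dieudonn\'e's 1951 memoir on automorphisms of the classical groups, which treats $\PGL_d$ over an arbitrary field by projective-geometric methods (the fundamental theorem of projective geometry, Borel--Tits style arguments). Your proposal is a correct proof, but it goes by a different route: Lie-theoretic reduction to the classification of $\mathrm{Aut}(\mathfrak{sl}_d(\mathbb{R}))$, exponentiation to the identity component, and a centraliser argument to pick up the second component. This is a perfectly standard and self-contained alternative for the real Lie group setting, and the three constituent steps are all sound: the exp-naturality identities $\exp(\Ad(x_0)X)=x_0(\exp X)x_0^{-1}$ and $\exp(-X^\top)=((\exp X)^\top)^{-1}$ propagate the Lie algebra formula to a neighbourhood of the identity and hence to $G^\circ$ by generation, and the extension step correctly reduces to triviality of the centraliser of $\PSL_d(\mathbb{R})$ in $\PGL_d(\mathbb{R})$. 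Two small points you should tighten if you want this to be fully rigorous. First, the classification of $\mathrm{Aut}(\mathfrak{sl}_d(\mathbb{R}))$ via Dynkin diagram symmetries really classifies $\mathrm{Aut}(\mathfrak{sl}_d(\mathbb{C}))$; passing to the real form requires a short descent argument (show that a complex $h\in\PGL_d(\mathbb{C})$ for which $\Ad(h)$, or $\Ad(h)\circ\tau$, commutes with conjugation can be rescaled to lie in $\PGL_d(\mathbb{R})$), which you elide. Second, the invocation of Schur's lemma for ``the irreducible action of $G^\circ$ on $\mathbb{R}^d$'' is loose, since $\PSL_d(\mathbb{R})$ has no linear action on $\mathbb{R}^d$; the clean formulation is to lift the would-be centralising element to $\GL_d(\mathbb{R})$, use that $\SL_d(\mathbb{R})$ is perfect to kill the scalar cocycle, and then apply Schur to the irreducible $\SL_d(\mathbb{R})$-action on $\mathbb{R}^d$. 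Neither gap is serious. Compared with the paper's reference, your approach is more transparent for readers with a Lie theory background and matches the spirit of the algebraic-group material already used in \S\ref{se:algebraic-bit}, whereas Dieudonn\'e's method is more elementary in the sense of avoiding root systems but requires a lengthier projective-geometric case analysis.
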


In combination with Lemma \ref{lemma.irred.zariski} and Lemma \ref{lemma.tensor.irred} below, the following lemma proves the first statement of Proposition \ref{pr:key}.

\begin{lemma}\label{lemma.zdense}
Let $(B_1,\ldots,B_N) \in \GL_d(\mathbb{R})^N$ and $\iota$ be as in Theorem \ref{th:main}. Then the subsemigroup of $\PGL_d(\mathbb{R}) \times \PGL_d(\mathbb{R})$ generated by $\{(\bar{B}_i, \bar{B}_{\iota(i)}) \colon 1 \leq i \leq N\}$ is Zariski dense in $\PGL_d(\mathbb{R})\times \PGL_d(\mathbb{R})$.
\end{lemma}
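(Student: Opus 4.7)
The approach is a Goursat-type dichotomy applied to the Zariski closure of the given semigroup. Let $\mathcal{H}$ denote the Zariski closure in $\mathcal{G} := \PGL_d(\mathbb{R}) \times \PGL_d(\mathbb{R})$ of the subsemigroup $\Gamma$ generated by the pairs $(\bar{B}_i, \bar{B}_{\iota(i)})$, $1 \leq i \leq N$. By the standard fact that the Zariski closure of a subsemigroup of a linear algebraic group is itself a closed subgroup, $\mathcal{H}$ is a Zariski-closed subgroup of $\mathcal{G}$; it therefore suffices to show $\mathcal{H} = \mathcal{G}$.

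The first step is to verify that both coordinate projections $\pi_j(\mathcal{H})$ equal $\PGL_d(\mathbb{R})$. Because $\iota$ is a permutation, each of $\pi_1(\Gamma)$ and $\pi_2(\Gamma)$ contains the set $\{\bar{B}_i : 1 \leq i \leq N\}$, which by hypothesis generates a Zariski-dense subsemigroup of $\PGL_d(\mathbb{R})$. Since images of morphisms of linear algebraic groups are Zariski-closed subgroups, each $\pi_j(\mathcal{H})$ is a Zariski-closed subgroup of $\PGL_d(\mathbb{R})$ containing a Zariski-dense subset, and therefore equals $\PGL_d(\mathbb{R})$.

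Next I would apply the classical Goursat lemma. Setting $N_1 := \{g \in \PGL_d(\mathbb{R}) : (g,e) \in \mathcal{H}\}$ and $N_2 := \{g \in \PGL_d(\mathbb{R}) : (e,g) \in \mathcal{H}\}$ produces two Zariski-closed normal subgroups of $\PGL_d(\mathbb{R})$, and Goursat identifies $\mathcal{H}/(N_1 \times N_2)$ with the graph of an algebraic-group isomorphism between the quotients $\PGL_d(\mathbb{R})/N_1$ and $\PGL_d(\mathbb{R})/N_2$. Since $\PGL_d(\mathbb{R})$ is almost simple as a real algebraic group with trivial center, its Zariski-closed normal subgroups are only $\{e\}$ and $\PGL_d(\mathbb{R})$ itself; the existence of the quotient isomorphism then forces $N_1 = N_2$. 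The case $N_1 = N_2 = \PGL_d(\mathbb{R})$ gives $\mathcal{H} = \mathcal{G}$ as desired, whereas in the remaining case $N_1 = N_2 = \{e\}$ and $\mathcal{H}$ is the graph of a Lie-group automorphism $\phi$ of $\PGL_d(\mathbb{R})$ satisfying $\phi(\bar{B}_i) = \bar{B}_{\iota(i)}$ for each $i$.

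To rule out this remaining case I would invoke Lemma \ref{le:isos}, which forces either $\phi(g) = xgx^{-1}$ or $\phi(g) = x(g^\top)^{-1} x^{-1}$ for some $x \in \PGL_d(\mathbb{R})$. Multiplicativity of $\phi$ propagates the relation at the generators to all words, yielding $\phi(\bar{B}_\iii) = \bar{B}_{\iota(\iii)}$ for every $\iii \in \Sigma_N^*$. A direct rearrangement in the inner-automorphism case produces $\bar{B}_\iii = h\bar{B}_{\iota(\iii)}h^{-1}$ with $h = x^{-1}$, contradicting $(\ref{ass1})$; the contragredient case produces $\bar{B}_\iii = h(\bar{B}_{\iota(\iii)}^{-1})^\top h^{-1}$ with $h = x^\top$, contradicting $(\ref{ass2})$. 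The main delicacy I anticipate is carrying out the Goursat step cleanly over $\mathbb{R}$ --- specifically ensuring that the images and kernels arising in the argument are genuinely Zariski-closed subgroups rather than merely constructible sets --- but this is a routine consequence of the standard theory of real linear algebraic groups once one works within the almost-simple group $\PGL_d$.
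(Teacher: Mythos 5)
Your proof is correct and follows essentially the same route as the paper: pass to the Zariski closure, use surjectivity of both coordinate projections, apply Goursat's lemma together with the fact that $\PGL_d(\mathbb{R})$ has only trivial Zariski-closed normal subgroups to reduce to the graph-of-an-automorphism case, and then use the classification of automorphisms of $\PGL_d(\mathbb{R})$ (Lemma \ref{le:isos}) to derive a contradiction with hypotheses (\ref{ass1}) and (\ref{ass2}). The only differences are cosmetic (e.g.\ you define the $N_j$ directly rather than via $\ker\pi_j = \{\id\}\times N_1$, and you spell out the propagation of $\phi(\bar B_i)=\bar B_{\iota(i)}$ to all words).
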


\begin{proof}
Let $\Gamma$ denote the subsemigroup of $\PGL_d(\mathbb{R}) \times \PGL_d(\mathbb{R})$ generated by the set $\{(g_i, g_{\iota(i)}) \colon 1 \leq i \leq N\}$. Recall that the Zariski closure of $\Gamma$, which we denote by $G$, is a linear algebraic group.  Let $\pi_1,\pi_2 \colon \PGL_d(\mathbb{R}) \times \PGL_d(\mathbb{R}) \to \PGL_d(\mathbb{R})$ denote the projections onto the first and second co-ordinates respectively. Since the subsemigroup of $\PGL_d(\mathbb{R})$ generated by $g_1,\ldots,g_N$ includes a Zariski dense subsemigroup of $\PGL_d(\mathbb{R})$ it follows that $\pi_1(G)=\pi_2(G)=\PGL_d(\mathbb{R})$. Define
\[H_1:=\ker \pi_1 \subseteq \{\id\} \times \PGL_d(\mathbb{R})\]
and
\[H_2:=\ker \pi_2 \subseteq  \PGL_d(\mathbb{R}) \times \{\id\}\]
and write $H_1=\{\id\}\times N_1$ and $H_2=N_2\times \{\id\}$. Obviously $H_1$ and $H_2$ are normal subgroups of $G$ and are closed in the Zariski topology. Using the surjectivity of the projections $\pi_1$ and $\pi_2$ it is not difficult to deduce that $N_1$ and $N_2$ are also normal subgroups of $\PGL_d(\mathbb{R})$. Consider the map $G \to (\PGL_d(\mathbb{R})/N_1)\times (\PGL_d(\mathbb{R})/N_2)$ defined by $(g,h) \mapsto (gN_1,hN_2)$. By Goursat's lemma the image of this map is the graph of a linear group isomorphism $\phi \colon \PGL_d(\mathbb{R})/N_1 \to \PGL_d(\mathbb{R})/N_2$. 

Our objective is to show that $N_1=N_2=\PGL _d(\mathbb{R})$; once this has been shown it will follow immediately that $G$ contains the groups $\{\id\} \times \PGL_d(\mathbb{R})$ and $\PGL_d(\mathbb{R}) \times \{\id\}$, and hence must equal $\PGL_d(\mathbb{R}) \times \PGL_d(\mathbb{R})$. But the only Zariski-closed normal subgroups of $\PGL_d(\mathbb{R})$ are $\{\id\}$ and $\PGL_d(\mathbb{R})$ itself. Together with the existence of the isomorphism $\phi$ this implies that necessarily $N_1=N_2$ since the two possible quotients of $\PGL_d(\mathbb{R})$ by its normal subgroups are trivially non-isomorphic. We must therefore eliminate the possibility that $N_1=N_2=\{\id\}$.

Suppose for a contradiction that $N_1=N_2=\{\id\}$. In this case we have $G=\{(g,\phi(g)) \colon g \in \PGL_d(\mathbb{R})\}$ where $\phi \colon \PGL_d(\mathbb{R}) \to \PGL_d(\mathbb{R})$ is an isomorphism of linear algebraic groups, and in particular is a Lie group isomorphism. By Lemma \ref{le:isos}, either there exists  $x \in \PGL_d(\mathbb{R})$ such that $\phi(g)= xgx^{-1}$ for all $g \in \PGL_d(\mathbb{R})$, or there exists $x \in \PGL_d(\mathbb{R})$ such that $\phi(g)=x(g^\top)^{-1}x^{-1}$ for all $g \in \PGL_d(\mathbb{R})$; but hypothesis (\ref{ass1}) of Theorem \ref{th:main} excludes the first possibility and hypothesis (\ref{ass2}) excludes the second.  The proof is complete.\end{proof}


In the following two expository results, we note two standard facts and provide brief a proof for the convenience of the readers.

\begin{lemma}\label{lemma.wedge.irred}
Let $1 \leq k \leq d$. Then the representation $\pi \colon \GL_d(\mathbb{R}) \to \GL(\wedge^k \mathbb{R}^d)$ defined by $\pi(g):=g^{\wedge k}$ is strongly irreducible.
\end{lemma}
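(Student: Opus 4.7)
The plan is to reduce strong irreducibility to irreducibility and then verify the latter by a torus-plus-permutations argument.

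First I would observe that $\GL_d(\mathbb{R})$ is Zariski-connected: it is the Zariski-open complement of the irreducible hypersurface $\{\det=0\}$ in the irreducible affine variety $\Mat_d(\mathbb{R})$, so as a Zariski-open subset of an irreducible variety it is itself irreducible, hence Zariski-connected. Appealing to the observation recalled in \S\ref{subsub.irred}, for a linear algebraic group the strong irreducibility of a representation is equivalent to the irreducibility of its restriction to the Zariski-identity component. It therefore suffices to show that $\pi$ is an irreducible representation of $\GL_d(\mathbb{R})$.

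Next, to establish irreducibility, I would let $W \subseteq \wedge^k \mathbb{R}^d$ be a nonzero $\GL_d(\mathbb{R})$-invariant subspace and analyse $W$ via the diagonal torus $T:=\{\mathrm{diag}(t_1,\ldots,t_d) : t_i\in \mathbb{R}^\ast\}$. Writing $e_I := e_{i_1}\wedge\cdots\wedge e_{i_k}$ for $I=\{i_1<\cdots<i_k\}\subseteq\{1,\ldots,d\}$, the space $\wedge^k\mathbb{R}^d$ decomposes under $T$ into one-dimensional weight spaces $\mathbb{R}\cdot e_I$ with distinct characters $t\mapsto \prod_{i\in I}t_i$. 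A standard Vandermonde-type argument (or equivalently, the fact that distinct characters of an algebraic torus are linearly independent) shows that any $T$-invariant subspace is a sum of weight spaces, so $W = \sspan\{e_I : I\in \mathcal{I}\}$ for some subset $\mathcal{I}$ of the set of $k$-subsets of $\{1,\ldots,d\}$.

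Finally, $W$ is preserved by every permutation matrix $P_\sigma$ with $\sigma \in S_d$, and $P_\sigma^{\wedge k}$ sends $e_I$ to $\pm e_{\sigma(I)}$. Since $S_d$ acts transitively on the set of $k$-subsets of $\{1,\ldots,d\}$, the set $\mathcal{I}$ is either empty or equal to the whole index set; combined with $W\neq\{0\}$ this gives $W=\wedge^k\mathbb{R}^d$. The main point to be careful about in this argument is the passage from irreducibility to strong irreducibility, but this is handled entirely by the Zariski-connectedness of $\GL_d(\mathbb{R})$ together with the general fact quoted above; no further computation is required.
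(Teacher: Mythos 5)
Your proof is correct and follows essentially the same strategy as the paper's: reduce strong irreducibility to irreducibility via connectedness, then decompose any invariant subspace into torus weight lines and finish with a transitivity argument. The paper phrases the reduction via the analytic connectedness of $\SL_d(\mathbb{R})$ and invokes transitivity of $\SL_d(\mathbb{R})$ on pure wedge vectors, whereas you work directly in $\GL_d(\mathbb{R})$ using its Zariski connectedness and use only permutation matrices for the transitivity step; these are cosmetic variations of the same argument, and if anything your final step (transitivity of $S_d$ on $k$-element subsets) is the more elementary of the two.
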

\begin{proof} It suffices to consider the restriction of $\pi$ to $\SL_d(\mathbb{R})$ and by connectedness of $\SL_d(\mathbb{R})$, it suffices to show that this restriction of $\pi$ is irreducible. To see this, note that any irreducible non-trivial $\SL_d(\mathbb{R})$-submodule (i.e.\ $\SL_d(\mathbb{R})$-invariant non-trivial subspace) $W$ of $\wedge^k\mathbb{R}^d$ is a direct sum of irreducible $A$-submodules, where $A$ is the diagonal subgroup of $\SL_d(\mathbb{R})$. But any irreducible $A$-submodule of $\wedge^d\mathbb{R}^d$ is given by $\mathbb{R} (e_{i_1}\wedge \cdots \wedge  e_{i_k})$ where $e_i$'s is the canonical basis of $\mathbb{R}^d$. Since $\SL_d(\mathbb{R})$ acts transitively on these pure wedge vectors, we have $W=\wedge^k \mathbb{R}^d$ proving the claim.
\end{proof}
One similarly deduces the following  
\begin{lemma}\label{lemma.tensor.irred}
Let $d \geq 1$. Then the representation $\pi \colon \GL_d(\mathbb{R}) \times \GL_d(\mathbb{R}) \to \GL_{d^2}(\mathbb{R})$ defined by $\pi(g,h):=g \otimes h$ is strongly irreducible. \qed
\end{lemma}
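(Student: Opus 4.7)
The plan is to follow exactly the template of the proof of Lemma \ref{lemma.wedge.irred}. First I would argue that, just as in that lemma, it suffices to show that the restriction of $\pi$ to the connected subgroup $\SL_d(\mathbb{R}) \times \SL_d(\mathbb{R})$ is irreducible: rescaling $g$ and $h$ by nonzero reals does not affect the strong irreducibility of the image (by the final remark of \S\ref{subsub.irred}) and reduces matters from $\GL_d(\mathbb{R}) \times \GL_d(\mathbb{R})$ to a subgroup whose identity component is $\SL_d(\mathbb{R})\times \SL_d(\mathbb{R})$; connectedness of the latter then reduces strong irreducibility to irreducibility by the discussion of \S\ref{subsub.irred}.

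Next I would exploit the diagonal torus $A\subset\SL_d(\mathbb{R})$. The joint action of $A\times A$ on $\mathbb{R}^d\otimes\mathbb{R}^d$ decomposes as a direct sum of one-dimensional weight spaces $\mathbb{R}(e_i\otimes e_j)$ with pairwise distinct characters $(\mathrm{diag}(t_1,\dots,t_d),\mathrm{diag}(s_1,\dots,s_d))\mapsto t_is_j$, where $e_1,\dots,e_d$ is the canonical basis of $\mathbb{R}^d$. Any nonzero $\SL_d(\mathbb{R})\times \SL_d(\mathbb{R})$-invariant subspace $W$ is in particular $A\times A$-invariant, and is therefore the direct sum of some subcollection of these weight lines; in particular it contains at least one pure tensor of the form $e_i\otimes e_j$.

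Finally I would invoke the transitivity of the $\SL_d(\mathbb{R})$-action on $\mathbb{R}^d\setminus\{0\}$ (which is immediate for $d\geq 2$; the case $d=1$ is trivial): the product group $\SL_d(\mathbb{R})\times\SL_d(\mathbb{R})$ then acts transitively on the set of pure tensors $v\otimes w$ with $v,w\neq 0$, so $W$ contains every such tensor, and since pure tensors span $\mathbb{R}^d\otimes\mathbb{R}^d$ we conclude $W=\mathbb{R}^{d^2}$. There is no serious obstacle in this argument; the only point that merits checking is the pairwise distinctness of the $A\times A$-characters attached to the different $e_i\otimes e_j$, which however is immediate from the explicit formula for these characters above.
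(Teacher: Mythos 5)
Your proof is correct and is essentially the paper's intended argument: the paper explicitly states that Lemma~\ref{lemma.tensor.irred} is deduced ``similarly'' to Lemma~\ref{lemma.wedge.irred}, and you carry out exactly that template (reduce to the connected group $\SL_d(\mathbb{R})\times\SL_d(\mathbb{R})$, decompose any invariant subspace into the weight lines $\mathbb{R}(e_i\otimes e_j)$ under the diagonal torus $A\times A$, then use transitivity of $\SL_d(\mathbb{R})\times\SL_d(\mathbb{R})$ on nonzero pure tensors to conclude). The only cosmetic difference is that the paper's reduction to $\SL_d(\mathbb{R})$ is stated directly as a restriction, whereas you invoke the rescaling remark of \S\ref{subsub.irred}; both are valid.
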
 

\section{Proof of Theorem \ref{th:main}: the analytic part}\label{se:thm1}

Fix $N$, $d$, $\iota$, $(B_1,\ldots,B_N) \in \GL_d(\mathbb{R})^N$, $(A_1,\ldots,A_N) \in \GL_{d^2}(\mathbb{R})^N$ and $s \in (1,2] \cup [d^2-2,d^2-1)$ as in the statement of Theorem \ref{th:main}. We have $A_{i}:=B_i \otimes B_{\iota(i)}$ for every $i=1,\ldots,N$ and it is clear that $\iota(\iii\jjj)=\iota(\iii)\iota(\jjj)$ and $A_{\iii}=B_\iii \otimes B_{\iota(\iii)}$ for every $\iii,\jjj \in \Sigma_N^*$. We claim that without loss of generality we may make the additional assumption $1 <s \leq 2$. To prove this claim we adapt an argument from \cite[\S2]{Mo18}. Indeed, if $s \in [d^2-2,d^2-1)$ let us define $(B_1',\ldots,B_N') \in GL_d(\mathbb{R})^N$ by 
\[B_i':= |\det B_i|^{\frac{1}{d^2-s}} \left(B_i^{-1}\right)^{\top},\]
define a tuple $(A_1',\ldots,A_N')$ by
\[A_i':=B_i' \otimes B_{\iota(i)}' = |\det A_i|^{\frac{1}{d^2-s}} \left(A_i^{-1}\right)^\top \]
and define $s':=d^2-s \in (1,2]$. The assumptions of Theorem \ref{th:main} are clearly also satisfied by $(B_1',\ldots,B_N')$, $(A_1',\ldots,A_N')$ and $s'$. Furthermore we have
\begin{align*}\varphi^{s'}(A_\iii') &= \sigma_1(A_\iii')\sigma_2(A_\iii')^{s'-1}\\
&= \sigma_1\left(|\det A_\iii |^{\frac{1}{s'}} \left(A_\iii^{-1}\right)^\top \right)\sigma_2\left(|\det A_\iii|^{\frac{1}{s'}} \left(A_\iii^{-1}\right)^\top\right)^{s'-1}\\
 &= |\det A_\iii| \sigma_1\left( A_\iii^{-1} \right)\sigma_2\left( A_\iii^{-1}\right)^{s'-1}\\
  &= |\det A_\iii|  \sigma_{d^2}\left( A_\iii \right)^{-1}\sigma_{d^2-1}\left( A_\iii\right)^{1+s-d^2}\\
  &=\sigma_1(A_\iii)\cdots \sigma_{d^2-1}(A_\iii)^{s-(d^2-2)}=\varphi^s(A_\iii)\end{align*}
  for all $\iii \in\Sigma_N^*$, which implies that the $\varphi^s$-equilibrium states of $(A_1,\ldots,A_N)$ are precisely the $\varphi^{s'}$-equilibrium states of $(A_1',\ldots,A_N')$. Since $1 <s'\leq 2$ we have successfully reduced the general case of Theorem \ref{th:main} to the special case $1<s\leq 2$, proving the claim.

Let us now begin the proof proper, making the additional assumption $1 <s\leq 2$. Define potentials $\Phi$, $\Phi^{(1)}$, $\Phi^{(2)} \colon \Sigma_N^* \to (0,+\infty)$ by
\[\Phi(\iii):=\varphi^s(A_\iii)=\sigma_1(A_\iii)\sigma_2(A_\iii)^{s-1}=\left\|A_\iii\right\|^{2-s} \left\|A_\iii^{\wedge 2}\right\|^{s-1},\]
\[\Phi^{(1)}(\iii):= \sigma_1(B_\iii)^s\sigma_1(B_{\iota(\iii)})\sigma_2(B_{\iota(\iii)})^{s-1}=\left\|B_\iii\right\|^s \left\|B_{\iota(\iii)}\right\|^{2-s} \left\|B_{\iota(\iii)}^{\wedge 2}\right\|^{s-1},\]
\[\Phi^{(2)}(\iii):= \Phi^{(1)}(\iota(\iii))=\sigma_1(B_{\iota(\iii)})^s\sigma_1(B_{\iii})\sigma_2(B_{\iii})^{s-1}=\left\|B_{\iota(\iii)}\right\|^s \left\|B_{\iii}\right\|^{2-s} \left\|B_{\iii}^{\wedge 2}\right\|^{s-1}.\]
It is clear that each is a submultiplicative potential. By Proposition \ref{pr:key}, the tuple $(B_1^{\wedge k},\ldots,B_N^{\wedge k})$ is strongly irreducible for each $k=1,\ldots,d-1$ and obviously so too is $(B_{\iota(1)}^{\wedge k},\ldots,B_{\iota(N)}^{\wedge k})$. Hence the conditions of Theorem \ref{th:x} are met by $\Phi^{(1)}$ and by $\Phi^{(2)}$ and each has a unique equilibrium state. We denote these equilibrium states respectively by $\mu_1$ and $\mu_2$.

We observed in \S\ref{subsec.lin.alg} that for every $A,B \in \GL_d(\mathbb{R})$ the singular values of $A\otimes B$ are precisely the numbers $\sigma_i(A)\sigma_j(B)$ where $1 \leq i,j\leq d$. In particular the largest singular value is $\sigma_1(A)\sigma_1(B)$ and the second-largest is necessarily either $\sigma_1(A)\sigma_2(B)$ or $\sigma_2(A)\sigma_1(B)$. This simple observation implies the identity
\begin{align}\label{eq:very-important}\Phi(\iii) &= \sigma_1(A_\iii)\sigma_2(A_\iii)^{s-1}\\\nonumber
 &= \sigma_1\left(B_\iii \otimes B_{\iota(\iii)}\right)\sigma_2\left(B_\iii \otimes B_{\iota(\iii)}\right)^{s-1}\\\nonumber
&=\sigma_1(B_\iii)\sigma_1(B_{\iota(\iii)})\max\left\{\sigma_1(B_\iii)^{s-1}\sigma_2(B_{\iota(i)})^{s-1},\sigma_1(B_{\iota(\iii)})^{s-1}\sigma_2(B_\iii)^{s-1}\right\}\\\nonumber
&=\max\left\{\Phi^{(1)}(\iii),\Phi^{(2)}(\iii)\right\}\end{align}
which is the fundamental observation around which the whole of Theorem \ref{th:main} is built. 

We claim that $P(\Phi)=P(\Phi^{(1)})=P(\Phi^{(2)})$. Since for every $\iii \in \Sigma_N^*$ we have
\[\sum_{|\iii|=n} \Phi^s(\iii) =\sum_{|\iii|=n} \max\left\{\Phi^{(1)}(\iii),\Phi^{(2)}(\iii)\right\}\leq \left(\sum_{|\iii|=n} \Phi^{(1)}(\iii)+\sum_{|\iii|=n}\Phi^{(2)}(\iii)\right) \]
and
\[\sum_{|\iii|=n} \Phi^s(\iii) =\sum_{|\iii|=n} \max\left\{\Phi^{(1)}(\iii),\Phi^{(2)}(\iii)\right\}\geq \frac{1}{2}\left(\sum_{|\iii|=n} \Phi^{(1)}(\iii)+\sum_{|\iii|=n}\Phi^{(2)}(\iii)\right) \]
it follows by direct consideration of the definition of the pressure that $P(\Phi^s)=\max\{P(\Phi^{(1)}),P(\Phi^{(2)})\}$. To prove the claim it is therefore sufficient to show that $P(\Phi^{(1)})=P(\Phi^{(2)})$. But for every $n \geq 1$ we have
\[\sum_{|\iii|=n}\Phi^{(1)}(\iii)=\sum_{|\iii|=n}\Phi^{(2)}(\iota(\iii))=\sum_{|\iii|=n}\Phi^{(2)}(\iii)\]
where the first equation follows from the definition of $\Phi^{(2)}$ and the second from the fact that $\iota \colon \{1,\ldots,N\}^n \to \{1,\ldots,N\}^n$ is a bijection. The equation $P(\Phi^{(1)})=P(\Phi^{(2)})$ follows directly and the claim is proved.

We now claim that the measures $\mu_1$ and $\mu_2$ are precisely the ergodic equilibrium states of $\Phi$, which is to say the ergodic $\varphi^s$-equilibrium states of $(A_1,\ldots,A_N)$. To see this suppose that $\mu \in \mathcal{M}_\sigma$ is an arbitrary ergodic measure on $\Sigma_N$.  By the subadditive ergodic theorem we have
\begin{align*}\lim_{n \to \infty} \frac{1}{n}\log \Phi^s(x|_n) &= \Lambda\left(\Phi^s,\mu\right),\\
\lim_{n \to \infty} \frac{1}{n}\log \Phi^{(1)}(x|_n) &= \Lambda\left(\Phi^{(1)},\mu\right),\\
\lim_{n \to \infty} \frac{1}{n}\log \Phi^{(2)}(x|_n) &= \Lambda\left(\Phi^{(2)},\mu\right)\end{align*}
for $\mu$-a.e. $x \in \Sigma_N$. In particular for any such $x$ we have
\begin{align*}\Lambda(\Phi^s,\mu)&=\lim_{n \to \infty} \frac{1}{n}\log \Phi^s(x|_n) \\
&=\lim_{n \to \infty} \frac{1}{n}\log \max\left\{\Phi^{(1)}(x|_n),\Phi^{(2)}(x|_n)\right\} \\
&=\max\left\{\lim_{n \to \infty} \frac{1}{n}\log\Phi^{(1)}(x|_n),\lim_{n \to \infty} \frac{1}{n}\log\Phi^{(2)}(x|_n)\right\} \\
&=\max\left\{\Lambda(\Phi^{(1)},\mu),\Lambda(\Phi^{(2)},\mu)\right\}\end{align*}
where we have used \eqref{eq:very-important} in the second equation.
We have shown that  $\Lambda(\Phi^s,\mu)=\max\{\Lambda(\Phi^{(1)},\mu),\Lambda(\Phi^{(2)},\mu)\}$ for every ergodic measure $\mu$. Hence if $\mu$ is an ergodic equilibrium state of $\Phi^{(1)}$ then 
\[P(\Phi^{(1)})=P(\Phi^s)\geq h(\mu)+\Lambda(\Phi^s,\mu) \geq h(\mu)+\Lambda(\Phi^{(1)},\mu)=P(\Phi^{(1)})\]
where the first inequality follows from the subadditive variational principle. It follows that $P(\Phi^s)=h(\mu)+\Lambda(\Phi^s,\mu)$ and therefore $\mu$ is an equilibrium state of $\Phi^s$. Similarly if $\mu$ is an ergodic equilibrium state of $\Phi^{(2)}$ then it is an equilibrium state of $\Phi^s$. On the other hand if $\mu$ is an ergodic equilibrium state of $\Phi^s$ then either $\Lambda(\Phi^s,\mu) = \Lambda(\Phi^{(1)},\mu)$ so that
\[P(\Phi^{(1)})=P(\Phi^s)=h(\mu)+\Lambda(\Phi^s,\mu) = h(\mu)+\Lambda(\Phi^{(1)},\mu)\]
and $\mu$ is an equilibrium state of $\Phi^{(1)}$, or $\Lambda(\Phi^s,\mu) = \Lambda(\Phi^{(2)},\mu)$ so that
\[P(\Phi^{(2)})=P(\Phi^s)=h(\mu)+\Lambda(\Phi^s,\mu) = h(\mu)+\Lambda(\Phi^{(2)},\mu)\]
and $\mu$ is an equilibrium state of $\Phi^{(2)}$. This proves the claim.

We have shown that the ergodic $\varphi^s$-equilibrium states of $(A_1,\ldots,A_N)$ are precisely $\mu_1$ and $\mu_2$, so to complete the proof of the theorem it remains only to show that these two measures are distinct. By Theorem \ref{th:x} there exists $C>0$ such that
\[C^{-1}\Phi^{(1)}(\iii) \leq e^{|\iii| P(\Phi^s)}\mu_1([\iii])=e^{|\iii| P(\Phi^{(1)})}\mu_1([\iii])  \leq C\Phi^{(1)}(\iii) \]
and
\[C^{-1}\Phi^{(2)}(\iii)  \leq e^{|\iii| P(\Phi^s)}\mu_2([\iii])=e^{|\iii| P(\Phi^{(2)})}\mu_2([\iii])  \leq C\Phi^{(2)}(\iii)  \]
for all $\iii \in \Sigma_N^*$. (This shows in particular that both measures are fully supported on $\Sigma_N$, since it implies that every cylinder set has nonzero measure and since cylinder sets form a basis for the topology of $\Sigma_N$.) If it were the case that $\mu_1=\mu_2$ then these inequalities would imply the relation
\begin{equation}\label{eq:potratio}C^{-2}\leq\frac{\Phi^{(1)}(\iii)}{\Phi^{(2)}(\iii)}\leq C^2\end{equation}
for all $\iii \in \Sigma_2^*$. 

By Lemma \ref{lemma.zdense} the subsemigroup of $\PGL_d(\mathbb{R}) \times \PGL_d(\mathbb{R})$ generated by the pairs $(\bar{B}_1,\bar{B}_{\iota(1)}), \ldots, (\bar{B}_N,\bar{B}_{\iota(N)})$ is Zariski dense in $\PGL_d(\mathbb{R}) \times \PGL_d(\mathbb{R})$, so by Proposition \ref{prop.benoist} there exists $\iii_0 \in \Sigma_N^*$ such that $\lambda_1(B_{\iii_0})/\lambda_2(B_{\iii_0}) \neq \lambda_1(B_{\iota(\iii_0)})/\lambda_2(B_{\iota(\iii_0)})$. If we have $\mu_1=\mu_2$ then applying \eqref{eq:potratio} to $\iii:=\iii^n_0$ now yields
\[C^{-2}\leq\left(\frac{\sigma_1\left(B_{\iii_0}^n\right)\sigma_2\left(B_{\iota(\iii_0)}^n\right)}{\sigma_1\left(B_{\iota(\iii_0)}^n\right)\sigma_2\left(B_{\iii_0}^n\right)}\right)^{s-1}\leq C^2\]
for all $n \geq 1$. Since $s - 1\neq 0$ it follows that
\[\lim_{n \to \infty} \left(\frac{\sigma_1\left(B_{\iii_0}^n\right)\sigma_2\left(B_{\iota(\iii_0)}^n\right)}{\sigma_1\left(B_{\iota(\iii_0)}^n\right)\sigma_2\left(B_{\iii_0}^n\right)}\right)^{\frac{1}{n}}=1,\]
but since clearly
\begin{align*}\lim_{n \to \infty}\left(\frac{\sigma_1\left(B_{\iii_0}^n\right)\sigma_2\left(B_{\iota(\iii_0)}^n\right)}{\sigma_1\left(B_{\iota(\iii_0)}^n\right)\sigma_2\left(B_{\iii_0}^n\right)}\right)^{\frac{1}{n}} &=\lim_{n \to \infty} \left(\frac{\sigma_1\left(B_{\iii_0}^n\right)/\sigma_2\left(B_{\iii_0}^n\right)}{\sigma_1\left(B_{\iota(\iii_0)}^n\right)/\sigma_2\left(B_{\iota(\iii_0)}^n\right)}\right)^{\frac{1}{n}} \\
&= \frac{\lambda_1(B_{\iii_0})/\lambda_2(B_{\iii_0})}{\lambda_1(B_{\iota(\iii_0)})/\lambda_2(B_{\iota(\iii_0)})} \neq 1\end{align*}
using \eqref{eq:yamamoto}, this is impossible. We conclude that $\mu_1$ and $\mu_2$ must be distinct, and the theorem is proved.

%
%

\section{Two further perspectives on Theorem \ref{th:main}}\label{se:rmk}
Readers of this article may be aware of the following sufficient condition for the uniqueness of equilibrium states of submultiplicative potentials: a submultiplicative potential $\Phi \colon \Sigma_N^* \to (0,+\infty)$ is called \emph{quasi-multiplicative} if there exist $\delta>0$ and $n_0 \geq 1$ such that
\[\max_{|\kkk| \leq n_0} \Phi(\iii \kkk \jjj )\geq \delta\Phi(\iii)\Phi(\jjj)\]
for all $\iii,\jjj \in \Sigma_N^*$. Every quasi-multiplicative submultiplicative potential has a unique equilibrium state (see for example \cite{Fe11,KaRe14}). Theorem \ref{th:main} therefore implies that the potential $\Phi^s(\iii):=\varphi^s(A_\iii)$ fails to be quasi-multiplicative for $1<s<3$ where $A_1,A_2$ are as defined in the statement of that theorem. 

This failure of quasi-multiplicativity can be demonstrated directly in the following manner. For simplicity of exposition we suppose in this section that $\alpha_1>\alpha_2>0$. Let us consider the second exterior powers of $A_1$ and $A_2$. In the basis $e_1\wedge e_2$, $e_3\wedge e_4$, $e_1 \wedge e_4-e_2\wedge e_3$, $e_1 \wedge e_3$, $e_2\wedge e_4$, $e_1\wedge e_4+e_2\wedge e_3$ for $\wedge^2\mathbb{R}^4$ the matrix of $A_1^{\wedge 2}$ is 
\[\begin{pmatrix}
\alpha_1^2&0&0&0&0&0\\
0&\alpha_2^2&0&0&0&0\\
0&0&\alpha_1\alpha_2&0&0&0\\
0&0&0&\alpha_1\alpha_2\cos^2\theta &\alpha_1\alpha_2\sin^2\theta &-2\alpha_1\alpha_2\cos\theta\sin\theta\\
0&0&0&\alpha_1\alpha_2\sin^2\theta&\alpha_1\alpha_2\cos^2\theta&2\alpha_1\alpha_2\cos\theta\sin\theta\\
0&0&0&\alpha_1\alpha_2\cos\theta\sin\theta&-\alpha_1\alpha_2\cos\theta\sin\theta&\alpha_1\lambda_2(\cos^2\theta-\sin^2\theta)
\end{pmatrix}\]
and that of $A_2^{\wedge 2}$ is
\[\begin{pmatrix}
\alpha_1\alpha_2\cos^2\theta &\alpha_1\alpha_2\sin^2\theta &-2\alpha_1\alpha_2\cos\theta\sin\theta&0&0&0\\
\alpha_1\alpha_2\sin^2\theta&\alpha_1\alpha_2\cos^2\theta&2\alpha_1\alpha_2\cos\theta\sin\theta&0&0&0\\
\alpha_1\alpha_2\cos\theta\sin\theta&-\alpha_1\alpha_2\cos\theta\sin\theta&\alpha_1\lambda_2(\cos^2\theta-\sin^2\theta)&0&0&0\\
0&0&0&\alpha_1^2&0&0\\
0&0&0&0&\alpha_2^2&0\\
0&0&0&0&0&\alpha_1\alpha_2
\end{pmatrix}.\]
In particular if we define
\[B_1:= \begin{pmatrix}
\alpha_1^2&0&0\\
0&\alpha_2^2&0\\
0&0&\alpha_1\alpha_2
\end{pmatrix},\]
\[B_2:= \begin{pmatrix}
\alpha_1\alpha_2\cos^2\theta &\alpha_1\alpha_2\sin^2\theta &-2\alpha_1\alpha_2\cos\theta\sin\theta\\
\alpha_1\alpha_2\sin^2\theta&\alpha_1\alpha_2\cos^2\theta&2\alpha_1\alpha_2\cos\theta\sin\theta\\
\alpha_1\alpha_2\cos\theta\sin\theta&-\alpha_1\alpha_2\cos\theta\sin\theta&\alpha_1\lambda_2(\cos^2\theta-\sin^2\theta)
\end{pmatrix},
 \]
then we have
\[A_1^{\wedge 2}=\begin{pmatrix} B_1&0\\ 0&B_2\end{pmatrix},\qquad A_2^{\wedge 2} =\begin{pmatrix}B_2&0\\0&B_1
\end{pmatrix}\]
in the aforementioned basis. The eigenvalues of $A_1$ are $\alpha_1e^{i\theta}$, $\alpha_1e^{-i\theta}$, $\alpha_2e^{i\theta}$ and $\alpha_2e^{-i\theta}$. The eigenvalues of $A_1^{\wedge 2}$ are the products of pairs of distinct eigenvalues of $A_1$ and hence are $\alpha_1\alpha_2 e^{2i\theta}$, $\alpha_1\alpha_2 e^{-2i\theta}$, $\alpha_1\alpha_2$ (with multiplicity two),  $\alpha_1^2$ and $\alpha_2^2$. Since $B_1$ obviously has eigenvalues $\alpha_1^2$, $\alpha_1\alpha_2$ and $\alpha_2^2$ it follows that the remaining eigenvalues of $A_1^{\wedge 2}$ pertain to $B_2$, and in particular every eigenvalue of $B_2$ has absolute value $\alpha_1\alpha_2$. Thus $\lambda_1(B_1)=\alpha_1^2$ and $\lambda_1(B_2)=\alpha_1\alpha_2$. In particular if $\iii$ is the word consisting of $n$ ones and $\jjj$ the word consisting of $n$ twos, the linear map $A^{\wedge 2}_\iii A^{\wedge2}_\kkk A^{\wedge2}_\jjj$ has the form
\[\begin{pmatrix}
B_1^n & 0\\
0& B_2^n
\end{pmatrix}
\begin{pmatrix}
B_\kkk & 0\\
0& D_\kkk
\end{pmatrix}
\begin{pmatrix}
B_2^n & 0\\
0& B_1^n
\end{pmatrix}.\]
Since $\|B_1^n\|=\alpha_1^{2n}$ and $\|B_2^n\|\simeq \alpha_1^n\alpha_2^n$ the norm of this product is necessarily bounded above by approximately $\alpha_1^{3n}\alpha_2^n \max\{\|B_\kkk\|,\|D_\kkk\|\} \ll \alpha_1^{4n} = \|A^{\wedge 2}_\iii\|\cdot\|A^{\wedge 2}_\jjj\|$. Thus the failure of quasi-multiplicativity of $\varphi^2$, and more broadly of $\varphi^s$ when $1<s<3$, can be seen to arise from the splitting of $\wedge^2\mathbb{R}^4$ into two invariant three-dimensional subspaces on which the actions of $A_1^{\wedge 2}$ and $A_2^{\wedge 2}$ are substantially different.

However, this description of the mechanism of Theorem \ref{th:ex1} in terms of bare-hands algebraic computation is unsatisfying insofar as it lacks any reference to the \emph{a priori} more geometrically relevant action on $\mathbb{R}^4$. We therefore offer the following more geometric explanation of the failure of quasi-multiplicativity for the matrices defined in Theorem \ref{th:main}. Let us define $A_1$-invariant subspaces of $\mathbb{R}^4$ by
\[U_1:=\left\{\begin{pmatrix} a\\ b\\0\\0\end{pmatrix}\colon a,b\in\mathbb{R}\right\}, \qquad U_2:=\left\{\begin{pmatrix} 0\\0\\a\\ b\end{pmatrix}\colon a,b\in\mathbb{R}\right\}\]
and $A_2$-invariant subspaces of $\mathbb{R}^4$ by
\[V_1:=\left\{\begin{pmatrix} a\\ 0\\ b\\0\end{pmatrix}\colon a,b\in\mathbb{R}\right\}, \qquad V_2:=\left\{\begin{pmatrix} 0\\a\\0\\ b\end{pmatrix}\colon a,b\in\mathbb{R}\right\}.\]
We observe that the two larger singular values of $A_1^n$, both being equal to $\alpha_1^n$, arise from its action on  $U_1$, whereas the two smaller singular values are both equal to $\alpha_2^n$ and arise from the invariant subspace $U_2$. Similarly the two largest singular values of $A_2^n$ arise from its action on $V_1$ and the two smaller singular values from its action on $V_2$. In order to find a word $\kkk$ such that the first two singular values of $A_1^nA_\kkk A_2^n$ are both approximately $\alpha_1^{2n}$, therefore, the matrix $A_\kkk$ would have to transpose the subspace $V_1$ into a position where its angle with $U_1$ is bounded away from perpendicularity by an \emph{a priori} amount, and in particular where it does not intersect $U_1^\perp=U_2$. But this is impossible: if $X_1,X_2 \in \GL_2(\mathbb{R})$ are arbitrary matrices then $(X_1 \otimes X_2)V_1$ necessarily intersects $U_2$ nontrivially. To see this let $P$ denote the $2\times 2$ matrix with upper-left entry equal to $1$ and all other entries equal to zero. We have $V_1=(I \otimes P)\mathbb{R}^4$ and $U_2= \ker(P \otimes I)$. If $(X_1 \otimes X_2)V_1$ did not intersect $U_2$ then the image of $(X_1\otimes X_2)(I\otimes P)$ would not intersect the kernel of $P\otimes I$ and the matrices $(P \otimes I)(X_1\otimes X_2)(I\otimes P)=PX_1 \otimes X_2P$ and $(X_1\otimes X_2)(I\otimes P)=X_1 \otimes X_2P$ would have the same rank; but the first matrix has rank one and the second has rank two, since the rank of the Kronecker product of two matrices is equal to the product of their ranks. This argument moreover shows that $U_2 \cap (X_1\otimes X_2)V_1$ has dimension precisely $1$. Thus no element of $\GL_2(\mathbb{R})\otimes \GL_2(\mathbb{R})$, and in particular no element of the group generated by $A_1$ and $A_2$, can move $V_1$ into a position where its intersection with $U_2$ is anything other than one-dimensional.

%
%

\section{Proof of Theorem \ref{th:ex2}}\label{se:ex2}

Suppose that $(A_1,\ldots,A_4) \in \GL_4(\mathbb{R})^4$ satisfies the hypotheses of Theorem \ref{th:main}.  Each $A_i$ is the Kronecker product of a matrix with singular values $\alpha_1$ and $\alpha_2$ and a matrix with singular values $1$ and $1$. Hence each $A_i$ has singular values $\alpha_1$, $\alpha_1$, $\alpha_2$ and $\alpha_2$. Since for every $B \in \GL_4(\mathbb{R})$ we have
\[\sigma_1(B) \geq \left(\sigma_1(B)\sigma_2(B)\sigma_3(B)\sigma_4(B)\right)^{\frac{1}{4}}=|\det B|^{\frac{1}{4}}\]
it follows that for each $n \geq 1$ we have
\[\sum_{|\iii|=n} \varphi^1(A_\iii) \geq \sum_{|\iii|=n} \left|\det A_\iii \right|^{\frac{1}{4}} = \sum_{|\iii|=n} \left(\alpha_1\alpha_2\right)^{\frac{n}{2}}= 4^n\left(\alpha_1\alpha_2\right)^{\frac{n}{2}}\]
which implies that
\[P_{\varphi^1}(A_1,\ldots,A_4)\geq \frac{1}{2}\log \left(16\alpha_1\alpha_2\right)> 0\]
 and therefore $\dimaff(A_1,\ldots,A_4) > 1$. For each $n \geq 1$ we equally have
\[\sum_{|\iii|=n} \varphi^2\left(A_\iii\right) \leq \left(\sum_{i=1}^4 \varphi^2(A_i)\right)^n = \left(4\alpha_1^2\right)^n<\left(\frac{4}{\left(1+\sqrt{\frac{3}{2}}\right)^2}\right)^n\]
where we have made use of the submultiplicativity property $\varphi^2(AB) \leq \varphi^2(A)\varphi^2(B)$ in the first inequality. Hence
\[P_{\varphi^2}(A_1,\ldots,A_4) \leq 2\log(2\alpha_1)<2\log\left(\frac{2}{1+\sqrt{\frac{3}{2}}}\right)<0 \]
so that $\dimaff(A_1,\ldots,A_4) \in (1,2)$ as claimed. For the remainder of the proof define $s:=\dimaff(A_1,\ldots,A_4)$. By Theorem \ref{th:main} there exist precisely two distinct ergodic $\varphi^s$-equilibrium states $\mu_1$, $\mu_2$ for $(A_1,\ldots,A_4)$ and these measures have Lyapunov dimension equal to $\dimaff(A_1,\ldots,A_4)$ and are fully supported on $\Sigma_4$.

Consider now the iterated function system defined by $T_ix:=A_ix+v_i$ for all $x \in \mathbb{R}^4$, where  $(v_1,\ldots,v_4) \in (\mathbb{R}^4)^4$ is to be determined. We claim that the set of all $(v_1,\ldots,v_4) \in (\mathbb{R}^4)^4$ such that $(T_1,\ldots,T_4)$ satisfies the strong separation condition has positive Lebesgue measure. To do this we will show that the set of all such tuples $(v_1,\ldots,v_4)$ contains a nonempty open set. Define
\[v_1:=\begin{pmatrix}1\\0\\\frac{1}{\sqrt{2}}\\0\end{pmatrix},\qquad
v_2:=\begin{pmatrix}-1\\0\\\frac{1}{\sqrt{2}}\\0\end{pmatrix},\qquad
v_3:=\begin{pmatrix}0\\1\\-\frac{1}{\sqrt{2}}\\0\end{pmatrix},\qquad
v_4:=\begin{pmatrix}0\\-1\\-\frac{1}{\sqrt{2}}\\0\end{pmatrix}
 \]
and observe that every two distinct vectors $v_i$, $v_j$ are separated by a Euclidean distance of $2$. Define $X \subset \mathbb{R}^4$ to be the closed origin-centred Euclidean ball of radius $1+\sqrt{\frac{3}{2}}$. For each $i=1,\ldots,4$ the open Euclidean ball of radius $1$ centred on $v_i$ is a subset of $X$, and these subsets do not intersect one another. If we define $T_ix:=A_ix+v_i$ for all $x \in \mathbb{R}^4$ and $i=1,\ldots,4$ then since $\max_{1 \leq i \leq 4}\|A_i\|=\alpha_1<1/(1+\sqrt{\frac{3}{2}})$, each of the sets $T_iX$ is contained in the open Euclidean ball of radius $1$ and centre $v_i$. Since these balls are pairwise disjoint, the sets $T_iX$ are pairwise disjoint subsets of $X$ and therefore $(T_1,\ldots,T_4)$ satisfies the strong separation condition. It is clear that for every $(v_1',\ldots,v_4')$ sufficiently close to $(v_1,\ldots,v_4)$ the four images of $X$ are again contained in the open Euclidean balls of radius $1$ and centre $v_i$, so the strong separation condition remains satisfied for any $(v_1',\ldots,v_4')$ sufficiently close to $(v_1,\ldots,v_4)$. The claim is proved.

We may now prove the theorem. Since
\[\max_{1 \leq i \leq 4} \left\|A_i\right\| =\alpha_1 < \frac{1}{1+\sqrt{\frac{3}{2}}}<\frac{1}{2},\]
by \cite[Theorem 1.9]{JoPoSi07} for Lebesgue a.e. $(v_1,\ldots,v_4) \in (\mathbb{R}^4)^4$ the measures $m_1:=\Pi_*\mu_1$ and $m_2:=\Pi_*\mu_2$ both have dimension equal to their Lyapunov dimension, which is $\dimaff(A_1,\ldots,A_4)$. It follows in particular that there is a positive-measure set of tuples $(v_1,\ldots,v_4)$ such that the strong separation condition is satisfied and additionally $m_1:=\Pi_*\mu_1$ and $m_2:=\Pi_*\mu_2$ both have dimension equal to $\dimaff(A_1,\ldots,A_4)$. When  $(T_1,\ldots,T_4)$ satisfies the strong separation condition we note that $\Pi$ defines a homeomorphism from $\Sigma_4$ to the attractor and therefore $\Pi_*\mu_1$ and $\Pi_*\mu_2$ are mutually singular if and only if $\mu_1$ and $\mu_2$ are; but these two measures are distinct ergodic shift-invariant measures on $\Sigma_4$, and such measures are automatically mutually singular. Since  $\mu_1$ and $\mu_2$ are fully supported on $\Sigma_4$, $\Pi_*\mu_1$ and $\Pi_*\mu_2$ are fully supported on the attractor $\Pi(\Sigma_4)$. The proof is complete.

%
%

\section{Acknowledgements}
The research of I.D. Morris was partially supported by the Leverhulme Trust (Research Project Grant RPG-2016-194). C.S. is supported by SNF grant 178958. The authors thank A. K\"aenm\"aki for several helpful bibliographical suggestions.

\bibliographystyle{acm}
\bibliography{etc}

\end{document}